\def\eps{\epsilon}
\def\veps{\varepsilon}
\newcommand{\inv}[1]{{#1}^{-1}}
\newcommand{\g}[2]{\langle {#1}\,,\,{#2}\rangle}
\def\CC{{\mathbb C}}
\def\RR{{\mathbb R}}
\def\1{{\mathbbm 1}}
\def\tr{\operatorname{tr}}
\def\sin{\operatorname{sin}}
\def\grad{\operatorname{grad}}
\def\div{\operatorname{div}}
\newtheorem{thm}{Theorem}
\newtheorem{prop}[thm]{Proposition}
\newtheorem{lem}[thm]{Lemma}
\newtheorem{definition}{Definition}
\newcommand{\rg}[2]{( {#1},\,{#2})}
\numberwithin{equation}{section}
\numberwithin{thm}{section}
\title{An Inverse Problem for Fractional Connection Laplacians}
\author{Chun-Kai Kevin Chien}
\date{}
\begin{document}

\maketitle

\begin{abstract}
    Consider a fractional operator $P^s$, $0<s<1$, for connection Laplacian  $P:=\nabla^*\nabla+A$ on a smooth Hermitian vector bundle over a closed, connected Riemannian manifold of dimension $n\geq 2$. We show that local knowledge of the metric, Hermitian bundle, connection, potential, and source-to-solution map associated with $P^s$ determines these structures globally. This extends a result known for the fractional Laplace-Beltrami operator.
\end{abstract}

\section{Introduction}
\subsection{Background} 
The classical Calder{\'o}n problem seeks to recover the coefficient $\gamma$ in the elliptic equation 
\begin{equation}\label{eq_calderon1}
    \div(\gamma\grad u)=0
\end{equation} 
on an open set $U\subset \RR^n$ from its Dirichlet-to-Neumann map $\Lambda$, which takes prescribed boundary values $u|_{\partial U}=f$ to $\gamma\partial_\nu|_{\partial U}u$ . As outlined in \cite{uhlmann2009electrical}, this problem is motivated by real-world applications such as electrical impedance tomography, but the resolution of this problem and its analogues have seen numerous geometric applications. For example, recovering the metric $g$ up to a conformal factor from $\Lambda$ associated with 
\begin{equation}\label{eq_calderon2}
    \Delta_gu=0
\end{equation}
instead of \eqref{eq_calderon1} in two dimensions is a critical part of the proof of boundary rigidity in \cite{pestov2005two}.

One can generalize Calder{\'o}n's problem by considering bundle valued operators. A version of this is found in \cite{albin2013inverse}, where the authors replaced \eqref{eq_calderon2} with 
\begin{equation}\label{eq_calderon3}
    \nabla^*\nabla+A=0,
\end{equation}
with connection $\nabla$ on a Hermitian bundle over a Riemann surface with boundary $E\to M$ and potential $A\in L^\infty(M;\operatorname{End}(E))$. It was shown that appropriate boundary Cauchy data fixes $\nabla$ and $A$ up to unitary isomorphism. This recently was applied by \cite{Bao_2019} to the AdS/CFT correspondence in physics, where this result was used to argue that the bulk metric is fixed by entanglement entropies of regions on the boundary CFT.

Instead of local operators, one can consider nonlocal operators such as the fractional Laplacian, whose formulation as a suitably extended Dirichlet-to-Neumann map was given in the seminal paper \cite{caffarelli2007extension}. The geometric version of this extension involving the fractional Laplace-Beltrami operator was provided in \cite{chang2011fractional} in the context of asymptotically hyperbolic manifolds, and its relation to the fractional Yamabe problem was explored in \cite{gonzalez2013fractional}. There has also been recent interest in fractional Laplacians and related fractional operators from an inverse problems perspective. In the Euclidean setting, \cite{ghosh2020calderon} showed how one recovers the $L^\infty$ potential $q$ from a  nonlocal exterior Dirichlet-to-Neumann map associated with
\begin{align}\label{eq_nonloc}
\begin{cases}
(\Delta^s+q)u=0 \, \text{ on }U,\\
u=f\, \text{ on }\RR^n\setminus \overline{U}.
\end{cases}
\end{align}
This result was extended for lower regularity potentials in \cite{ruland2020fractional}. Work of \cite{cekic2020calderon} and \cite{covi2022higher} showed similar results for more general local perturbations of the fractional Laplacian, and \cite{bhattacharyya2021inverse} considered the related inverse problem for a ``regional''-type nonlocal perturbation. Work of \cite{ghosh2017calderon} dealt with the inverse problem associated with \eqref{eq_nonloc} with $\Delta$ replaced by elliptic $-\nabla\cdot(A(x)\nabla)$.

In a geometric setting, \cite{feiz2021} recovered the Riemannian metric $g$ from a fractional source-to-solution map $\mathcal{L}^{\operatorname{frac}}$ associated with 
\begin{equation}\label{eq_og}
\Delta_g^su=f
\end{equation}
on a closed and connected manifold $M$. Their strategy, which will also be used in this paper, was to relate source-to-solution data to a local equality of heat kernels; this then transformed the problem into an inverse problem for the wave equation, for which there are well-established techniques involving finite propogation speed and controllability to recover the metric structure.
Work by \cite{quan2022} used similar techniques to recover $g$ and additional Clifford bundle structures from $\mathcal{L}^{\operatorname{frac}}$ associated with a fractional Dirac operator and also discussed how such  fractional operators arise in a physical setting.

Our work is a generalization of \cite{feiz2021}, where we replace \eqref{eq_og} with a fractional power of a symmetric Laplace-type operator, $P^s$, to be defined below. The operator $P$ has the form of a lower order perturbation of a bundle Laplacian, and we show that a suitably-defined source-to-solution map $\mathcal{L}^{\operatorname{frac}}$ determines the geometric structures on a Hermitian bundle $E\to M$, or equivalently, the bundle-valued coefficients of $P$ up to gauge.

\subsection{Fractional connection Laplacians and source-to-solution maps}
Let $(M,g)$ be a smooth, closed Riemannian manifold equipped with a smooth Hermitian vector bundle $(E,\g{\cdot}{\cdot}_E)$.  Recall that if $\tilde E$ is an arbitrary tensor product $E^{\otimes m}\otimes T^{k,l}M$, then the appropriate tensor product of $ \g{\cdot}{\cdot}_E$ and $g$ together determine a bundle metric $\g{\cdot}{\cdot}_{\tilde E}$ on $\tilde E$. This induces an $L^2$-product on smooth sections $C^\infty(\tilde E)$ given by 
\[(u,v)_{L^2(\tilde E)}:=\int_M \langle u,v\rangle_{\tilde E} \, dV_g ,\]
and $L^2(M;\tilde E)$ is the completion of $C^\infty(M;\tilde E)$ under the induced norm. If $U\subset M$ is an open set, we denote the compactly supported smooth sections of $\tilde E$ inside $U$ by $C_0^\infty(U;\tilde E)$. We will drop the reference to the bundle $\tilde E$ whenever it is contextually clear.

Consider any smooth connection $\nabla:C^\infty(M;E)\to C^\infty(M;E\otimes T^*M)$ compatible with the Hermitian structure on $E$, that is $d\g{u}{v}=\g{\nabla u}{v}+\g{u}{\nabla v}$.  The Sobolev spaces $H^k(M;E)$ of nonnegative integer order $k$ are given by the completion of $C^\infty(M;E)$ under the norm induced by the inner product 
\[(u,v)_{H^k}:=(u,v)_{L^2}+(\nabla u,\nabla v)_{L^2}+\cdots +(\nabla^k u,\nabla^k v)_{L^2}.\]
Since $M$ is compact, the spaces $L^2$ and $H^k$ as defined do not depend on $g$, $\g{\cdot}{\cdot}_E$, or $\nabla$, though the inner products do.

With respect to $\rg{\cdot}{\cdot}_{L^2}$, the connection $\nabla$ has formal adjoint $\nabla^*$ in the sense that $(\nabla u,v)_{L^2}=(u,\nabla^*v)_{L^2}$ for $u\in C^\infty(M;E)$, $v\in C^\infty(E\otimes T^*M)$. Their composition is the \textit{bundle Laplacian} $\nabla^*\nabla=-\tr_g\nabla^2=:\Delta^E$. By elliptic regularity, there is a unique self-adjoint extension $\Delta^E:\mathcal D(\Delta^E)\subset L^2(M;E)\to L^2(M;E)$ with domain $\mathcal{D}(\Delta^E)=H^2(E)$. Taking a symmetric potential $A\in L^\infty(M;\operatorname{End}(E))$ in the sense that $\rg{u}{Av}_{L^2}=\rg{Au}{v}_{L^2}$,  we now consider the \textit{(generalized) connection Laplacian}
\begin{equation}\label{def_P}
    P:=\Delta^E+A
\end{equation}
which is also self-adjoint with domain $\mathcal{D}(P)=\mathcal{D}(\Delta^E)=H^2(M;E)$. The spectral theorem for self-adjoint operators yields an orthogonal decomposition $L^2=\bigoplus_{k=1}^\infty V_k$ in terms of the spectrum  $-c=\lambda_1<\lambda_2<\cdots\to +\infty$, $c\geq 0$, of $P$ (which is discrete), where each $V_k:=\operatorname{Ker}(P-\lambda_k)$ is the finite dimensional eigenspace for eigenvalue $\lambda_k$. In terms of the $L^2$-projectors $\pi_k:L^2\to V_k$, we define:

\begin{definition}
Let $P$ as in \eqref{def_P} be nonnegative (i.e.\ $\lambda_1\geq 0$). The \textit{fractional connection Laplacian} $P^s:\mathcal{D}(P^s)\subset L^2\to L^2$ for $0<s<1$ is given by 
\begin{equation}\label{def_fracP}
        P^su:=\sum_{k\geq 1}\lambda_k^s\pi_ku
\end{equation}
with domain $\mathcal{D}(P^s)=H^{2s}:=\{u\in L^2:\sum_{k\geq 1}\lambda_k^{2s}\|u\|^2_{L^2}<\infty\}$.
\end{definition}

Turning our attention to the restriction of $P^s$ to $\mathcal D(P^s)\cap \operatorname{Ker}(P)^\perp$ in the case that $P$ has a zero eigenvalue, we see that $P^s$ has inverse \begin{equation}\label{eq_negfrac1}
    P^{-s}f:=\sum_{k\geq 1}\lambda_k^{-s}\pi_kf,\,\hspace{4pt}0<s<1,
\end{equation} where the sum is taken over the nonzero eigenvalues. That is, if $f\in\operatorname{Ker}(P)^\perp$, then $u=P^{-s}f$ is the unique solution to $P^su=f$ on $M$. This motivates the following:
\begin{definition}\label{def_DNfrac}
The local fractional source-to-solution map $\mathcal{L}^{\operatorname{frac}}_{P,U}:C_0^\infty(U;E)\cap \operatorname{Ker}(P)^\perp\to C^\infty(U;E)$ associated with $P^s$ over an open set $U\subset M$ is given by $f\mapsto P^{-s}f|_U$.
\end{definition}

We now examine how $\mathcal{L}^{\operatorname{frac}}_{P,U}$ behaves under pullback. Consider smooth, closed, connected Riemannian manifolds equipped with respective Hermitian vector bundles, compatible connections, and symmetric potentials:
\begin{equation}\label{eq_structure}
\mathcal M_i:=(M_i,g_i, E_i,\g{\cdot}{\cdot}_{E_i},\nabla^{E_i},A^{E_i}),\, i=1,2.
\end{equation}
\begin{definition}
Suppose we have a smooth vector bundle isomorphism
\begin{equation}\label{fig_commute}
  \begin{tikzcd}
    E_2 \arrow{r}{\Psi} \arrow{d}[swap]{\pi_2} &E_1 \arrow{d}{\pi_1} \\
    M_2 \arrow{r}{\psi} &M_1 \\
  \end{tikzcd}
\end{equation}
We say $\Psi: \mathcal M_2\to \mathcal M_1$ is a \textit{structure-preserving isomorphism} if the following conditions are satisfied:
\begin{enumerate}
    \item The map $\psi$ is an isometry: $g_2=\psi^*g_1$.
    \item The Hermitian metric $\g{\cdot}{\cdot}_{E_2}=\Psi^*\g{\cdot}{\cdot}_{E_1}$, where $\Psi^*\g{u_2}{\tilde u_2}_{E_1}=\g{u_1}{\tilde u_1}_{E_1}$ for $u_2=\Psi^*u_1$, $\tilde u_2=\Psi^* \tilde u_1$. Note that the sections $u_2\in C^\infty(M_2;E_2)$ are in bijective  correspondence with $u_1\in C^\infty(M_1;E_1)$ by the pullback $u_2=\Psi^*u_1:=\inv\Psi\circ u_1\circ \psi$.
    \item The $\g{\cdot}{\cdot}_{E_2}$-compatible connection is  $\nabla^{E_2}=\Psi^*\nabla^{E_1}$, with the pullback connection acting on $u\in C^\infty(M_2;E_2)$, $\nu\in C^\infty(TM_2)$ by 
\begin{equation*}
    (\Psi^*\nabla^{E_2})_\nu u:=\Psi^*(\nabla^{E_1}_{d\psi(\nu)}({\inv\Psi}^*u)).
\end{equation*}
    \item The symmetric potential satisfies $A^{E_2}=\Psi^*A^{E_1}:=\Psi^*A^{E_1}{\inv\Psi}^*$. 
\end{enumerate}
\end{definition}

We note that a structure-preserving isomorphism naturally arises from any diffeomorphism $\psi:M_2\to M_1$ by defining $g_2:=\psi^*g_1$, $E_2:=\psi^*E_1$, and pulling back the remaining structures using $\Psi$; in this case, the bundle isomorphism $\Psi$ restricts to the identity on each fibre.

Letting   $P_i:=\Delta^{E_i}+A^{E_i}$ for $i=1,2$, we find $P_2(\Psi^*u)=\Psi^*(P_1u)$ for $u\in C^\infty(M_1;E_1)$ and furthermore $\operatorname{Ker}(P_2-\lambda_k)=\Psi^*\operatorname{Ker}(P_1-\lambda_k)$ by looking at pullbacks of $L^2$-eigensections. For sections not in $\operatorname{Ker}(P_1)$, we therefore find $\Psi^*(P_1^{-s}u)=P_2^{-s}(\Psi^*u)$. Moreover, restricting to $f\in C_0^\infty(U_1;E_1)\cap\operatorname{Ker}(P_1)^\perp$, setting $U_2=\inv\psi(U_1)$ and letting $\mathcal{L}_i^{\operatorname{frac}}=\mathcal{L}^{\operatorname{frac}}_{P_i,U_i}$ for $i=1,2$, we have 
\begin{equation}\label{eq_pullback}
    \Psi^*\mathcal{L}_1^{\operatorname{frac}}f=\mathcal{L}_2^{\operatorname{frac}}\Psi^*f.
\end{equation}

\subsection{Main result and outline} Our discussion above implies that \eqref{eq_pullback} holds whenever $\Psi:\mathcal M_2\to\mathcal M_1$ is a structure-preserving isomorphism. This motivates our main theorem, which is the following rigidity result:

\begin{thm}\label{thm_main}
Let $\mathcal M_1, \mathcal M_2$ be smooth, closed, connected Riemannian manifolds of dimension $n\geq 2$ equipped with respective Hermitian vector bundles, compatible connections, and symmetric potentials as in \eqref{eq_structure} such that $P_1,P_2\geq 0$. For $i=1,2$, consider open sets $U_i\subset M_i$ such that there is a structure-preserving isomorphism $\mathcal M_1|_{U_1}\cong \mathcal M_2|_{U_2}$:
\begin{equation}\label{fig_commute1}
  \begin{tikzcd}
    E_2 \arrow{r}{\tilde\Psi} \arrow{d}[swap]{\pi_2} &E_1 \arrow{d}{\pi_1} \\
    U_2 \arrow{r}{\tilde\psi} &U_1 \\
  \end{tikzcd}
\end{equation}

If  $\tilde\Psi^*\mathcal{L}_1^{\operatorname{frac}}=\mathcal{L}_2^{\operatorname{frac}}\tilde\Psi^*$ on $C_0^\infty(U_1;E_1)\cap \operatorname{Ker}(P_1)^\perp$, then for any open set $\mathcal O_2\Subset U_2$ there exists a global structure-preserving isomorphism $\Psi: \mathcal M_2\to\mathcal M_1$ with $\Psi|_{\mathcal O_2}=\tilde \Psi|_{\mathcal O_2}$ and $\psi|_{\mathcal O_2}=\tilde \psi|_{\mathcal O_2}$.
\end{thm}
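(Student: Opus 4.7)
The plan is to follow the template of \cite{feiz2021} and adapt it to the bundle setting, proceeding in two broad stages: first extracting local spectral/heat data from the fractional source-to-solution map, then running a wave-equation inverse problem to globalize the structures. For the first stage, I would pair the identity $\tilde\Psi^*\mathcal L_1^{\operatorname{frac}}=\mathcal L_2^{\operatorname{frac}}\tilde\Psi^*$ with test sections $h\in C_0^\infty(U_1;E_1)\cap\operatorname{Ker}(P_1)^\perp$ and rewrite each side using the Balakrishnan representation
\[
(P_i^{-s}f,h)_{L^2}=\frac{1}{\Gamma(s)}\int_0^\infty t^{s-1}\sum_{k\geq 1} e^{-t\lambda_k^{(i)}}(\pi_k^{(i)}f,h)_{L^2}\,dt.
\]
A Dirichlet-series/Laplace-transform uniqueness argument of the type used in the scalar case should then force $\lambda_k^{(1)}=\lambda_k^{(2)}=:\lambda_k$ and
\[
(\pi_k^{(1)}f,h)_{L^2(M_1;E_1)}=(\pi_k^{(2)}\tilde\Psi^*f,\tilde\Psi^*h)_{L^2(M_2;E_2)}
\]
for all $k\geq 1$ and $f,h\in C_0^\infty(U_1;E_1)\cap \operatorname{Ker}(P_1)^\perp$; equivalently, the heat propagators $e^{-tP_i}$ agree for all $t>0$ when tested against such sources, giving the expected local matching of heat kernels.

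For the second stage, I would pass from heat to wave data via the subordination identity $e^{-tP}=(4\pi t)^{-1/2}\int_0^\infty e^{-\tau^2/4t}\cos(\tau\sqrt P)\,d\tau$, reducing the question to an inverse problem for the bundle wave equation $\partial_t^2 u+Pu=0$ with sources and observations in $U_1\cong U_2$. Finite propagation speed together with Tataru's sharp unique continuation theorem then enables a Boundary Control / Belishev--Kurylev style reconstruction, which first recovers $g$ and an isometry $\psi:M_2\to M_1$ extending $\tilde\psi$ on $\mathcal O_2$, as in the scalar setting of \cite{feiz2021}. To recover the Hermitian bundle, connection, and potential, I would adapt the gauge-fixing strategy of \cite{albin2013inverse}: parallel transport of bundle-valued waves along minimizing geodesics anchored in $\mathcal O_2$ produces a consistent fiberwise identification $\Psi$, out of which the Hermitian metric, compatible connection, and symmetric potential are read off from the matched spectral data, e.g.\ via high-energy Gaussian beam / WKB concentration of eigensections. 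By anchoring all parallel transport on $\mathcal O_2$, where the structures coincide by hypothesis, the resulting global pair $(\Psi,\psi)$ automatically satisfies $(\Psi,\psi)|_{\mathcal O_2}=(\tilde\Psi,\tilde\psi)|_{\mathcal O_2}$.

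The principal obstacle will be the bundle-valued version of the controllability / density-of-products step that underlies the Boundary Control method. Tataru's theorem extends readily to principally scalar systems, giving the local unique continuation one needs, but globalizing it requires that parallel transport along competing minimizing geodesics produces a consistent gauge, a nontrivial compatibility between the spectral matching and the geometry of $(M,g)$ that must be extracted from the data rather than imposed. A secondary subtlety is the finite-dimensional subspace $\operatorname{Ker}(P_i)$, to which $\mathcal L^{\operatorname{frac}}$ is by definition blind: it must be recovered a posteriori, for instance by invoking unique continuation for the elliptic operator $P$, since harmonic sections on the matched neighborhood of $\mathcal O_2$ extend uniquely to global ones once the remaining structure has been fixed.
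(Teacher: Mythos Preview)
Your high-level architecture---fractional $\to$ heat $\to$ wave $\to$ geometric reconstruction---matches the paper exactly, and your first stage (Balakrishnan/Laplace uniqueness to match heat propagators, then subordination to reach the wave kernel) is equivalent to what the paper does in Section~1, though the paper works directly with the heat kernel via integration by parts and a Fourier-analytic argument rather than first extracting spectral data. The metric-recovery step in Section~3.1 is also the one you describe, via \cite{helin2018correlation}.

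The genuine divergence is in bundle recovery. You propose to adapt \cite{albin2013inverse}: parallel-transport bundle-valued waves along minimizing geodesics anchored in $\mathcal O_2$, then read off connection and potential via Gaussian-beam or WKB concentration. The paper does \emph{not} do this; it follows \cite{kurylev2018inverse} instead. Concretely, for each $y\in M$ it builds double sequences $(f_{jk})\subset C_0^\infty((0,T)\times U;E)$ whose wave solutions at time $T$ converge weakly to delta-like sections concentrated at $y$ (Lemmas~\ref{lem_dseq}, \ref{lem_conv}), so that any $e\in E_y$ is represented purely through inner products $\langle w^{f_{jk}}(T,\cdot),w^h(T,\cdot)\rangle_{L^2}$ computable from $\mathcal L^{wave}_{P,U}$ via the Blagovestchenskii identity. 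This yields local orthonormal frames everywhere, hence the Hermitian bundle structure and transition functions; then $P\phi$ is recovered by testing $\partial_t^2 w^h$ against such frames, and one solves algebraically for the connection one-form $B$ and the potential $A$. The payoff is exactly that this sidesteps the obstacle you flagged: there is no parallel transport along competing geodesics and hence no gauge-consistency problem to resolve, because the fibres are built pointwise from wave data rather than propagated along paths. Your parallel-transport route is not obviously wrong, but making it rigorous would require precisely the holonomy-compatibility argument you identify as the main difficulty, and no Gaussian-beam or high-frequency analysis is needed at all in the paper's approach.

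Finally, your worry about $\operatorname{Ker}(P_i)$ is handled automatically in the paper's first stage: by applying $\mathcal L^{\operatorname{frac}}$ to $P^k f$ for $k\geq 1$ (which always lies in $\operatorname{Ker}(P)^\perp$), integrating by parts, and using a moment/Fourier argument, one obtains equality of the \emph{full} heat kernels $e^{-tP_i}$ on $U\times U$, kernel projector included, so no separate elliptic unique-continuation step is required.
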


Our result generalizes that of \cite{feiz2021}, which concerned the trivial line bundle $\CC\to M$ with Euclidean connection and $A=0$ so that $P$ is the Laplace-Beltrami operator. As any Laplace-type operator on $E\to M$ is locally of form
\begin{equation}\label{eq_loc}
P=-g^{ij}\operatorname{Id}_E\partial_i\partial_j+b^j\partial_j+c, \,\text{ for } b^j,c\in C^\infty(U;\operatorname{End}(E)),
\end{equation}
our result can also be interpreted as recovering the coefficients of \eqref{eq_loc} up to gauge  for symmetric Laplace-type operators from local fractional source-to-solution data.

The outline of our proof---and this paper---is as follows. In Section One, we adapt the argument used in \cite{feiz2021} to show that $\mathcal{L}_{P,U}^{\operatorname{frac}}$ determines the heat kernel of $P$ on $U$. By a variant of the Kannai  transmutation formula, such a determination then yields the local source-to-solution map for a linear wave equation for $P$. We review propagation and unique continuation results for the wave equation in Section Two as well as recall some useful properties of cut times on Riemannian manifolds. Section Three proves our main theorem in two parts. The first uses the argument in \cite{helin2018correlation} to recover the Riemannian structure on $M$, while the second adapts the argument in \cite{kurylev2018inverse} to recover the remainder of the structure-preserving isomorphism.

\subsection*{Acknowledgements} The author would like to thank Gunther Uhlmann for suggesting this problem and his support throughout; and Hadrian Quan for helpful references.

\section{Transformation into a linear wave inverse problem}

\subsection{Local identification of the heat kernel}

On $\mathcal M$ and associated $P$, we start with an alternative expression for \eqref{eq_negfrac1} in terms of  the Gamma function 
\[\Gamma(s)=\int_0^{\infty} t^{s-1}e^{-t}\,dt\]
via spectral mapping. A substitution $t\mapsto t/\lambda$ 
yields 
\[\lambda^{-s}=\frac{1}{\Gamma(s)}\int_0^{\infty} t^{s-1}e^{-t\lambda}\,dt,\]
and therefore
\begin{equation}\label{eq_gamma}
    P^{-s}u=\frac{1}{\Gamma(s)}\int_0^\infty t^{s-1}e^{-tP}u\,dt, \,\hspace{4pt} 0<s<1,
\end{equation}
where $e^{-tP}$ is the heat semigroup generated by $P$. Recall that for $f\in L^2(M;E)$, the heat semigroup gives the smooth solution  to the heat equation
\begin{align}
\begin{cases}
(\partial_t+P)u(t,x)=0 \, \text{ on }(0,\infty)\times M,\\\label{eq_heat} 
u|_{t=0}=f.
\end{cases}
\end{align}
by setting $u(t,x):=e^{-tP}f$. Furthermore, its kernel satisfies the following:

\begin{lem}\label{lem_kerbounds}
On $\mathcal M$, the heat kernel $e^{-tP}(x,y)$ satisfies the pointwise bound
\begin{equation}\label{eq_ptbd}
    |e^{-tP}(x,y)|< Ct^{-(n+1)}e^{-d_g(x,y)^2/t}\,\text{ for } t<1
\end{equation}
and the operator bound
\begin{equation}\label{eq_opbd}
    \|e^{-tP}\|_{L^1\to L^\infty}\leq Ct^{-n/2}\,\text{ for } t>0.
\end{equation}
\end{lem}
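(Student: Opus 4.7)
The plan is to reduce both estimates to the scalar Laplace-Beltrami case via a Kato-type domination argument, after which the classical Gaussian upper bound on closed manifolds takes over. Because $P = \Delta^E + A$ is a bounded symmetric perturbation of the pure bundle Laplacian, the Duhamel/Feynman-Kac framework gives the fibrewise comparison
\[
  |e^{-tP}(x,y)| \leq e^{t\|A\|_\infty}\, |e^{-t\Delta^E}(x,y)|,
\]
while compatibility of $\nabla$ with $\g{\cdot}{\cdot}_E$ makes the Hess-Schrader-Uhlenbrock (Kato-type) inequality available, yielding the scalar domination
\[
  |e^{-t\Delta^E}(x,y)| \leq e^{-t\Delta_g}(x,y).
\]

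For the Gaussian pointwise bound \eqref{eq_ptbd}, I would then invoke the classical estimate of Li-Yau/Davies for the Laplace-Beltrami heat kernel on a closed Riemannian manifold,
\[
  e^{-t\Delta_g}(x,y) \leq C t^{-n/2}\, e^{-d_g(x,y)^2/(Ct)}, \quad 0 < t < 1,
\]
and compose it with the two dominations above, absorbing the bounded factor $e^{t\|A\|_\infty}$ into the constant for $t\in(0,1)$. This in fact produces the sharper prefactor $t^{-n/2}$, which dominates the stated $t^{-(n+1)}$.

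For the operator bound \eqref{eq_opbd}, I would first extract the on-diagonal estimate $\sup_x |e^{-tP}(x,x)| \leq C t^{-n/2}$ by setting $x = y$ in the pointwise bound, and then use the semigroup identity $e^{-tP} = e^{-(t/2)P}\circ e^{-(t/2)P}$, self-adjointness, and Cauchy-Schwarz on the kernel to obtain
\[
  |e^{-tP}(x,y)| \leq |e^{-tP}(x,x)|^{1/2}\,|e^{-tP}(y,y)|^{1/2},
\]
so $\|e^{-tP}\|_{L^1 \to L^\infty} \leq C t^{-n/2}$ on the small-$t$ regime; larger times would be handled by composing $e^{-tP} = e^{-P}\circ e^{-(t-1)P}$ together with uniform $L^\infty$-boundedness of $e^{-sP}$, itself a consequence of Kato domination and the $L^\infty$-contractivity of the scalar heat semigroup. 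The main obstacle is the bundle-valued nature of the problem and applying Hess-Schrader-Uhlenbrock cleanly; the hypotheses of metric-compatibility of $\nabla$ and symmetry of $A$ are precisely what make this inequality applicable, after which everything reduces to well-known scalar heat kernel estimates.
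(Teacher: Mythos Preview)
Your approach differs from the paper's, which simply cites \cite{ludewig2019strong} for \eqref{eq_ptbd} and invokes Sobolev embedding together with \cite{varopoulos1985hardy} for \eqref{eq_opbd}, without further argument. Your Kato/Hess--Schrader--Uhlenbrock reduction to the scalar Laplace--Beltrami heat kernel is a legitimate and more self-contained alternative for \eqref{eq_ptbd}: the domination $|e^{-tP}(x,y)|\le e^{t\|A\|_{L^\infty}}\,e^{-t\Delta_g}(x,y)$ combined with the Li--Yau/Davies Gaussian bound gives what is needed for $t<1$. One small correction: the intermediate inequality $|e^{-tP}(x,y)|\le e^{t\|A\|_{L^\infty}}|e^{-t\Delta^E}(x,y)|$ between two \emph{bundle}-valued kernels is not obvious, since fibrewise norms do not respect the Chapman--Kolmogorov convolution; it is cleaner to apply the diamagnetic inequality directly to $\Delta^E+(A+\|A\|_{L^\infty}\operatorname{Id}_E)$ and pass to the scalar kernel in one step.

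There is, however, a genuine gap in your large-$t$ argument for \eqref{eq_opbd}. Kato domination yields only $\|e^{-sP}\|_{L^\infty\to L^\infty}\le e^{s\|A\|_{L^\infty}}$, which is \emph{not} uniform in $s$ when $A\neq 0$, so the factorization $e^{-tP}=e^{-P}\circ e^{-(t-1)P}$ does not close as written. The fix uses the hypothesis $P\ge 0$ rather than Kato: factor $e^{-tP}=e^{-P/2}\,e^{-(t-1)P}\,e^{-P/2}$ and combine $\|e^{-(t-1)P}\|_{L^2\to L^2}\le 1$ with the short-time ultracontractive bounds $\|e^{-P/2}\|_{L^1\to L^2},\,\|e^{-P/2}\|_{L^2\to L^\infty}\le C$ already obtained. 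This gives $\|e^{-tP}\|_{L^1\to L^\infty}\le C$ for $t\ge 1$. Note, incidentally, that the stated decay $Ct^{-n/2}$ for \emph{all} $t>0$ is too strong whenever $\operatorname{Ker}(P)\neq 0$, since the kernel then converges to a nonzero projector; but only boundedness at infinity is actually used downstream in Proposition~\ref{prop_heatker}.
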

\begin{proof}
The first inequality can be found in \cite{ludewig2019strong}, while the second follows from Sobolev embedding and \cite{varopoulos1985hardy}.
\end{proof}

These bounds are crucial in showing the following:
\begin{prop}\label{prop_heatker}
Let $E\to M$ be a smooth vector bundle over a closed manifold, and consider  geometric structures $(g_i,\g{\cdot}{\cdot}_i, \nabla_i,A_i)$,  $i=1,2$, on this bundle and manifold that coincide over an open set $U\subset M$. Consider the local source-to-solutions maps $\mathcal{L}^{\operatorname{frac}}_i:=\mathcal{L}^{\operatorname{frac}}_{P_i,U}$ associated with these structures. If $\mathcal{L}^{\operatorname{frac}}_1=\mathcal{L}^{\operatorname{frac}}_2$, then we have the local equality of heat kernels 
\begin{equation}
    e^{-tP_1}(x,y)=e^{-tP_2}(x,y)\,\text{ for }(t,x,y)\in (0,\infty)\times U\times U.
\end{equation}
\end{prop}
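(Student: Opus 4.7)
The plan is to use the Gamma-function representation \eqref{eq_gamma} to convert the equality of the fractional source-to-solution maps into a family of time-dependent integral identities, and then extract pointwise heat kernel equality via a Mellin-transform uniqueness argument, adapting the strategy of \cite{feiz2021}.

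Fix a source $f \in C_0^\infty(U;E) \cap \operatorname{Ker}(P_1)^\perp \cap \operatorname{Ker}(P_2)^\perp$ and set $F_f(t, x) := \bigl((e^{-tP_1} - e^{-tP_2})f\bigr)(x)$. Substituting \eqref{eq_gamma} into the hypothesis $\mathcal{L}_1^{\operatorname{frac}} f = \mathcal{L}_2^{\operatorname{frac}} f$ on $U$ gives
\[
\int_0^\infty t^{s-1} F_f(t, x)\, dt = 0, \qquad x \in U,
\]
with absolute convergence guaranteed by Lemma \ref{lem_kerbounds}. Since all geometric data coincide on $U$, the Minakshisundaram--Pleijel parametrix shows $F_f(t, x) = O(t^N)$ as $t \to 0^+$ for every $N$; the restriction to $\operatorname{Ker}^\perp$ provides exponential decay as $t \to \infty$. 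Consequently the Mellin transform $M_{f, x}(z) := \int_0^\infty t^{z-1} F_f(t, x)\, dt$ is an entire function of $z \in \mathbb{C}$.

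The crucial bootstrap step exploits that $P_1 = P_2 =: P$ as differential operators on $U$: for every $k \in \mathbb{Z}_{\geq 0}$, $P^k f$ still lies in $C_0^\infty(U;E) \cap \operatorname{Ker}(P_1)^\perp \cap \operatorname{Ker}(P_2)^\perp$ (locality and symmetry of $P$ on $U$). Applying the source-to-solution equality to $P^k f$, using $e^{-tP_i} P^k f = (-\partial_t)^k (e^{-tP_i} f)$, and integrating by parts $k$ times (boundary terms vanishing by the preceding decay estimates) yields $M_{f, x}(s - k) = 0$ for every $k \in \mathbb{Z}_{\geq 0}$.

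The main obstacle is to deduce $M_{f, x} \equiv 0$ from vanishing on this arithmetic progression. Following \cite{feiz2021}, I plan to exploit the spectral expansion $M_{f, x}(z) = \Gamma(z)\, h_{f, x}(z)$, where $h_{f, x}$ is the difference of two generalized Dirichlet series in $z$. Weyl's asymptotics for the eigenvalues together with rapid decay of the eigenfunction coefficients of $f$ (from smoothness) should yield growth bounds on $h_{f, x}$ that are uniform in vertical strips; a Carlson- or Phragmén--Lindelöf-type uniqueness theorem applied to $z \mapsto M_{f, x}(s - z)$, which vanishes at $z = 0, 1, 2, \ldots$, then forces the function to vanish identically. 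Mellin inversion gives $F_f(t, x) \equiv 0$ on $(0, \infty) \times U$. Varying $f$ over a dense subclass and invoking smoothness of the heat kernels upgrades this to pointwise equality on $\operatorname{Ker}^\perp$; the finite-dimensional contributions from $\operatorname{Ker}(P_i)$ can be recovered by a separate $t \to \infty$ limit argument, completing the proof.
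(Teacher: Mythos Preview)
Your proposal follows the same opening moves as the paper's proof---the Gamma representation, replacing $f$ by $P^k f$, and integrating by parts $k$ times---but diverges at the uniqueness step, and this divergence introduces a genuine gap. You want to conclude $M_{f,x}\equiv 0$ from $M_{f,x}(s-k)=0$ for all $k\ge 0$ via a Carlson/Phragm\'en--Lindel\"of argument, citing only boundedness of $h_{f,x}$ (or $M_{f,x}$) on vertical strips. But Carlson's theorem requires exponential-type growth control in a half-plane, not merely bounds on individual vertical lines: concretely, you would need $|M_{f,x}(s-w)| \le Ce^{c|w|}$ for $\operatorname{Re}(w)\ge 0$. Tracing through the Mellin integral, this forces the parametrix constants $C_N$ in $|F_f(t,x)|\le C_N t^N$ to grow at most exponentially in $N$, which is tantamount to $F_f(t,x)=0$ for all small $t$---far stronger than what the Minakshisundaram--Pleijel expansion delivers for merely smooth data, where the $C_N$ generically grow factorially. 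Your Dirichlet-series reformulation does not evade this: the eigenfunction coefficients of a smooth $f$ decay rapidly, but with constants governed by $\|P^m f\|_{L^2}$, again of factorial type in general. So the Carlson step, as written, does not close.

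The paper sidesteps this obstruction by an off-diagonal trick that you have omitted. It fixes disjoint $\Omega_1,\Omega_2 \Subset U$, takes $f$ supported in $\Omega_1$, and evaluates at $x\in\Omega_2$. The Gaussian bound \eqref{eq_ptbd} then gives $|F_f(t,x)|\lesssim e^{-d^2/t}$ with $d=d_g(\Omega_1,\Omega_2)>0$; after substituting $t\mapsto 1/t$ this becomes honest exponential decay at infinity, so the Fourier transform of $\varphi(t)=\chi_{(0,\infty)}t^{-s}F_f(1/t,x)$ is holomorphic in a strip containing the real axis, and the vanishing of all moments forces $\varphi\equiv 0$. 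Having obtained $F_f=0$ on $(0,\infty)\times\Omega_2$, the paper then invokes unique continuation for the heat equation (licit since $F_f$ satisfies $(\partial_t+P)F_f=0$ on $U$, where $P_1=P_2$) to propagate vanishing to all of $(0,\infty)\times U$. This two-step structure---off-diagonal first, then parabolic unique continuation---supplies exactly the exponential decay your Mellin argument lacks, and incidentally removes the need for your separate treatment of the $\operatorname{Ker}(P_i)$ contributions.
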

\begin{proof}   For integer $k>0$, we have  $P^kf:=P_1^kf=P_2^kf\in C_0^\infty(U;E)$ for $f\in C_0^\infty(U;E)$, hence  $\mathcal{L}^{\operatorname{frac}}_1P^kf(x)=\mathcal{L}^{\operatorname{frac}}_2P^kf(x)$ for $x\in \Omega_1$. Consider open sets $\Omega_1,\Omega_2\Subset U$ whose closures are disjoint, and choose $f\in C_0^\infty(\Omega_1;E)$. Expressing the fractional power by \eqref{eq_gamma}, we find for $x\in U$

\begin{equation}\label{eq_idgamma1}
\int_0^\infty t^{s-1}(e^{-tP_1}-e^{-tP_2})P^kf(x)\,dt=0.
\end{equation}

Since $e^{-tP_i}$ is generated by $P_i$ and $e^{-tP_i}f$ is smooth, we have 
\begin{equation}\label{eq_commute}
    e^{-tP_i}P^kf=P^ke^{-tP_i}f=(-\partial_t)^ke^{-tP_i}f,
\end{equation}
on $U$, and \eqref{eq_idgamma1} implies
\begin{equation}\label{eq_idgamma2}
\int_0^\infty t^{s-1}\partial_t^k(e^{-tP_1}-e^{-tP_2})f(x)\,dt=0.
\end{equation}

Consider any point $x\in \Omega_2$. We proceed to integrate \eqref{eq_idgamma2} by parts $k$ times to find 
\begin{equation}
    \int_0^\infty t^{-(k+1)}\frac{(e^{-tP_1}-e^{-tP_2})f(x)}{t^{-s}}\,dt =0.
\end{equation}
This follows because the boundary terms in the integration by parts disappear, as seen from using the heat kernel to write
\begin{equation}
(-\partial_t)^k(e^{-tP_1}-e^{-tP_2})f(x)=\int_{\Omega_1} (e^{-tP_1}(x,y)-e^{-tP_2}(x,y))P^kf(y)\,dV(y).
\end{equation}
and observing the decay provided by Lemma \ref{lem_kerbounds}: decay at zero is given by \eqref{eq_ptbd} and $d_g(\Omega_1,\Omega_2)>0$, and decay at infinity is given by \eqref{eq_opbd}.

Substituting $t\mapsto 1/t$ in the above and re-indexing $k$ shows that 
\begin{equation}\label{eq_fourier}
\int_0^\infty t^k\frac{(e^{-\frac{1}{t}P_1}-e^{-\frac{1}{t}P_2})f(x)}{t^{s}}\,dt=0
\end{equation}
on $\Omega_2$ for all $k\geq 0$. Let \[\varphi(t):=\chi_{(0,\infty)}\frac{(e^{-\frac{1}{t}P_1}-e^{-\frac{1}{t}P_2})f(x)}{t^{s}}.
\]
The exponential bound in \eqref{eq_ptbd} implies that the Fourier transform $\widehat{\varphi}(\xi)$ is holomorphic for $\operatorname{Im}(\xi)<c$, $c>0$, and \eqref{eq_fourier} implies that $\widehat{\varphi}(\xi)$ vanishes to infinite order at $\xi =0$. We conclude that $(e^{-tP_1}-e^{-tP_2})f(x)=0$ for $t>0$ and $x\in\Omega_2$. In fact, by unique continuation of the heat equation \eqref{eq_heat}, we have  $(e^{-tP_1}-e^{-tP_2})f(x)\equiv 0$ on all of $(0,\infty)\times U$ (see \cite{lin1990uniqueness}). Since this holds for any $\Omega_1$ and $f\in C_0^\infty(\Omega_1; E)$, we see that $e^{-tP_1}(x,y)=e^{-tP_2}(x,y)$ on $(0,\infty)\times U$.
\end{proof}

We adapt this proposition to the setting of Theorem \ref{thm_main} by pulling back $P_1$ and the kernel $e^{-tP_1}(x,y)$ locally on $U_2$ to find:

\begin{prop}\label{prop_heatkercor}
Let $\mathcal M_1,\mathcal M_2$ satisfy the assumptions of Theorem \ref{thm_main} with local structure-preserving isomorphism $\tilde \Psi$ as in \eqref{fig_commute1}. If  $\tilde\Psi^*\mathcal{L}_1^{\operatorname{frac}}=\mathcal{L}_2^{\operatorname{frac}}\tilde\Psi^*$ on $C_0^\infty(U_1;E_1)\cap \operatorname{Ker}(P_1)^\perp$, then we have the local equality of heat kernels 
\begin{equation}\label{eq_heatloc}
    \tilde\Psi^*e^{-tP_1}(x,y)=e^{-tP_2}(x,y)\,\text{ for }(t,x,y)\in (0,\infty)\times U_2\times U_2.
\end{equation}
\end{prop}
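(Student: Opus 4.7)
The plan is to mimic the proof of Proposition \ref{prop_heatker}, inserting the local isomorphism $\tilde\Psi$ at the points where the original argument uses $P_1 = P_2$. The key algebraic fact I will exploit is that, since $\tilde\Psi$ is structure-preserving on $U_2 \to U_1$, the pullback intertwines the differential operators: $\tilde\Psi^*(P_1 g) = P_2(\tilde\Psi^* g)$ for $g \in C^\infty(U_1; E_1)$. Iterating, I get $\tilde\Psi^*(P_1^k f) = P_2^k(\tilde\Psi^* f)$ for every $k \geq 0$ and every $f \in C_0^\infty(\Omega_1; E_1)$ with $\Omega_1 \Subset U_1$. Because differential operators do not enlarge support, $P_1^k f$ is still compactly supported in $\Omega_1$, and for $k \geq 1$ it is automatically orthogonal to $\operatorname{Ker}(P_1)$, so it is an admissible input for the hypothesis $\tilde\Psi^* \mathcal{L}_1^{\operatorname{frac}} = \mathcal{L}_2^{\operatorname{frac}} \tilde\Psi^*$.

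Next I will apply the hypothesis to $P_1^k f$ for $k \geq 1$, expand both $P_i^{-s}$ via the Gamma-function formula \eqref{eq_gamma}, and use the commutation $e^{-tP_i} P_i^k = (-\partial_t)^k e^{-tP_i}$ together with the intertwining above to obtain
\begin{equation*}
\int_0^\infty t^{s-1} (-\partial_t)^k v(t,x)\, dt = 0, \qquad x \in U_2,
\end{equation*}
where $v(t,x) := \tilde\Psi^* e^{-tP_1} f(x) - e^{-tP_2} \tilde\Psi^* f(x)$. Fixing $\Omega_2 \Subset U_2$ with $d_{g_2}(\operatorname{supp}(\tilde\Psi^* f), \Omega_2) > 0$, Lemma \ref{lem_kerbounds} applied to each summand of $v$ (on $\mathcal M_1$ and $\mathcal M_2$ separately) gives Gaussian decay of $v$ as $t \to 0^+$ and polynomial decay as $t \to \infty$, uniformly for $x \in \Omega_2$. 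Consequently all boundary terms in $k$ successive integrations by parts vanish, and after substituting $t \mapsto 1/t$ and re-indexing $k$ I expect the clean identity $\int_0^\infty t^k v(1/t,x)/t^s\, dt = 0$ for every $k \geq 0$.

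From this point the argument proceeds verbatim as in Proposition \ref{prop_heatker}: the Fourier transform of $\varphi(t) := \chi_{(0,\infty)}(t)\, v(1/t,x)/t^s$ extends holomorphically into a horizontal strip $\{\operatorname{Im} \xi < c\}$ by the Gaussian decay and vanishes to infinite order at $\xi = 0$, so $\varphi \equiv 0$ and therefore $v(t,x) \equiv 0$ for $t > 0$ and $x \in \Omega_2$. To spread this from $\Omega_2$ to all of $U_2$, I will invoke unique continuation for the $P_2$-heat equation \cite{lin1990uniqueness}, using that both $\tilde\Psi^* e^{-tP_1} f$ and $e^{-tP_2} \tilde\Psi^* f$ satisfy $(\partial_t + P_2) u = 0$ on $(0, \infty) \times U_2$---the first by the intertwining property on $U_2$, the second by construction. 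Varying $f$ and $\Omega_1$ then yields the local heat-kernel identity \eqref{eq_heatloc}.

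The main obstacle I anticipate is essentially a bookkeeping one: no global extension of $\tilde\Psi$ to a map $M_2 \to M_1$ is available, so Proposition \ref{prop_heatker} cannot be invoked as a black box after pulling $P_1$ back to a globally defined operator on $M_2$. However, every substantive step---the Gamma-integral identity, the $k$-fold integration by parts, the Paley--Wiener argument, and the heat-equation unique continuation---is carried out pointwise at $x \in \Omega_2 \Subset U_2$, so the local structure-preserving isomorphism supplies everything that is needed. The only care required is verifying that the boundary terms in the integration by parts still vanish in the bundle setting, but this reduces to applying the bounds of Lemma \ref{lem_kerbounds} on each manifold separately to the two summands of $v$.
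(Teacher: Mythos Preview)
Your proposal is correct and is precisely the adaptation the paper has in mind: the paper's own proof of this proposition is a single sentence indicating that one pulls back $P_1$ and $e^{-tP_1}(x,y)$ locally to $U_2$ via $\tilde\Psi$ and reruns the argument of Proposition~\ref{prop_heatker}. Your write-up in fact supplies more detail than the paper does (e.g.\ the observation that $P_1^kf\in\operatorname{Ker}(P_1)^\perp$ for $k\geq 1$, and why unique continuation applies to $v$ on $U_2$), but the route is identical.
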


\subsection{The wave source-to-solution map}
Having shown that $\mathcal{L}^{\operatorname{frac}}_{P,U}$ determines the heat kernel of $P$ locally, we now show that knowledge of the heat kernel gives the source-to-solution map for a wave equation. Namely, consider
\begin{align}\label{eq_wave}
\begin{cases}
(\partial_t^2+P)w^f=f \, \text{ on }(0,\infty)\times M,\\
w^f|_{\{t=0\}}=\partial_tw^f|_{ \{t=0\}}=0.\\
\end{cases}
\end{align}
For smooth sections $f\in C^{\infty}((0,\infty)\times M;E)$, the unique solution to \eqref{eq_wave} is given by \begin{equation}\label{eq_wavesoln}
    w^f(t,x)=\int_0^t G(t-s,P)f(s,x)\,ds,
\end{equation}
where $G(s,P)$ is the wave kernel defined via the functional calculus with 
\begin{equation}
    G(s,\lambda):=\sum_{k=0}^\infty \frac{s^{2k+1}\lambda^k}{(2k+1)!}=
    \begin{cases}
\sin{(s\sqrt{\lambda})}/\sqrt{\lambda}\text{ for }\lambda>0,\\
\sinh{(s\sqrt{-\lambda})}/\sqrt{-\lambda}\text{ for }\lambda<0.
\end{cases}
\end{equation}

\begin{definition}\label{def_waveDN}
The local wave source-to-solution map $\mathcal{L}_{P,U}^{wave}:C_0^\infty((0,\infty)\times U; E)\to C^\infty((0,\infty)\times U;E)$ is given by $\mathcal{L}_{P,U}^{wave}f:=w^f|_{(0,\infty)\times U}$, with $w^f$ as in \eqref{eq_wavesoln}.
\end{definition}

The link between $\mathcal{L}^{\operatorname{frac}}_{P,U}$ and $\mathcal{L}_{P,U}^{wave}$ is given by Proposition \ref{prop_heatker} and the following:
\begin{prop}\label{prop_transmute}
Let $E\to M$, $(g_i,\g{\cdot}{\cdot}_i, \nabla_i,A_i)$ be as in Proposition \ref{prop_heatker}, and let $\mathcal{L}^{wave}_i:=\mathcal{L}_{P_i,U}^{wave}$ for $U\subset M$.  Equality of the heat kernels $e^{-tP_1}(x,y)=e^{-tP_2}(x,y)$ on $(0,\infty)\times U\times U$  implies that  $\mathcal{L}^{wave}_1=\mathcal{L}^{wave}_2$.
\end{prop}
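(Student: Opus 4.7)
The plan is to use the Duhamel representation \eqref{eq_wavesoln} to reduce the equality $\mathcal{L}^{wave}_1 = \mathcal{L}^{wave}_2$ to the pointwise identity
\[G(s, P_1) h(x) = G(s, P_2) h(x)\]
for every $h \in C_0^\infty(U;E)$, $s > 0$, and $x \in U$. The heat-kernel equality is then transferred to the wave family via uniqueness of generalized Dirichlet series, the same spectral device being available because both $e^{-t\lambda}$ and $G(s,\lambda)$ are functions of the same self-adjoint operator.

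Concretely, I would first fix $h \in C_0^\infty(U;E)$ and $x \in U$, integrate the heat-kernel identity against $h(y)\,dV(y)$, and obtain $e^{-tP_1}h(x) = e^{-tP_2}h(x)$ for all $t > 0$. Expanding by the spectral theorem writes each side as
\[e^{-tP_i}h(x) = \sum_{\lambda \in \operatorname{Spec}(P_i)} e^{-t\lambda}\,(\pi_\lambda^{(i)} h)(x),\]
with absolute convergence guaranteed by the Gaussian bound of Lemma \ref{lem_kerbounds}. Collecting terms of the two series with common exponent $\lambda$ and iterating the standard limit $t \to \infty$ along the increasing sequence of eigenvalues forces $(\pi_\lambda^{(1)} h)(x) = (\pi_\lambda^{(2)} h)(x)$ for every $\lambda \geq 0$. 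Substituting this eigenvalue-by-eigenvalue identification into the analogous spectral expansion of $G(s, P_i) h(x)$ gives the wave identity, and another application of \eqref{eq_wavesoln} to integrate in the source variable then yields $\mathcal{L}^{wave}_1 f = \mathcal{L}^{wave}_2 f$ on $(0,\infty)\times U$ for every $f \in C_0^\infty((0,\infty)\times U; E)$.

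The main technical obstacle is the convergence of the wave spectral series: unlike $e^{-t\lambda}$, the function $G(s,\lambda)$ does not decay in $\lambda$, so absolute convergence is not automatic after the substitution. Smoothness of $h$ rescues this step, because repeated application of $P_i$ forces the coefficients $(\pi_\lambda^{(i)} h)(x)$ to decay faster than any polynomial in $\lambda$; combined with Weyl-type eigenvalue asymptotics and Sobolev bounds on eigensections, this gives absolute convergence of the wave series. An alternative, which is likely closer to the ``variant of the Kannai transmutation formula'' alluded to in the paper's outline, would be to express $G(s,\lambda)$ directly as an integral against $e^{-t\lambda}$ via a suitable subordination or inverse Gauss--Weierstrass type formula, so that $G(s,P_i)h$ becomes an explicit integral of the heat semigroup applied to $h$ and the proposition reduces to Proposition \ref{prop_heatker} via Fubini.
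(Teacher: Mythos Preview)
Your primary argument via spectral decomposition and uniqueness of Dirichlet series is correct and constitutes a genuinely different route from the paper's proof. The paper does not decompose spectrally at all; instead it invokes the Kannai transmutation identity
\[e^{-tP}u=\frac{1}{4\pi^{1/2}t^{3/2}}\int_{0}^\infty e^{-s/4t}\,G(s,P)u\,ds,\]
obtained from $e^{-tP}u=(4\pi t)^{-1/2}\int_{-\infty}^\infty e^{-s^2/4t}G'(s,P)u\,ds$ after one integration by parts. This exhibits $e^{-tP_i}u(x)$ as the Laplace transform, in the variable $1/4t$, of $s\mapsto G(s,P_i)u(x)$; equality of the heat semigroups on $U$ together with injectivity of the Laplace transform then gives $G(s,P_1)u(x)=G(s,P_2)u(x)$ for $s>0$, $x\in U$, without ever touching eigenspaces. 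Note that this is the \emph{reverse} of the subordination you sketch at the end: the paper writes heat in terms of wave and inverts, rather than wave in terms of heat.

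Your approach yields slightly more information---the pointwise values of the spectral projections $(\pi_\lambda^{(i)}h)(x)$ match on $U$ eigenvalue by eigenvalue---at the price of the convergence bookkeeping you correctly flag for the wave series. The paper's transform argument is shorter and sidesteps any pointwise series analysis, but does rely on the transmutation formula from \cite{ludewig2019strong} as an external input.
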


\begin{proof}
By \eqref{eq_wavesoln}, it suffices to show that $G(s,P_1)u(x)=G(s,P_2)u(x)$ for all $u\in C_0^\infty(U;E)$, $x\in U$. To do so, we exploit a version of the Kannai transmutation formula as used in \cite{feiz2021}.
By Theorem 2.1 in \cite{ludewig2019strong}, we have 
\[e^{-tP}u=\frac{1}{(4\pi t)^{1/2}}\int_{-\infty}^\infty e^{-s^2/4t}G'(s,P)\,ds\]
Because $e^{-s^2}G'(s,\lambda)\to 0$ as $s\to\pm\infty$, we integrate by parts to find
\[e^{-tP}u=\frac{1}{4\pi^{1/2}t^{3/2}}\int_{0}^\infty e^{-s/4t}G(s,P)u\,ds.\]

For smooth sections $u\in C_0^\infty(U;E)$, we have $e^{-tP_1}u=e^{-tP_2}u$ on $U$ by assumption. Therefore, using the Laplace transform $\mathcal L$, we find on $U$:
\[\mathcal{L}(G(\cdot,P_1)u)(1/4t)=\int_{0}^\infty e^{-s/4t}G(s,P_1)u\,ds=\int_{0}^\infty e^{-s/4t}G(s,P_2)u\,ds=\mathcal{L}(G(\cdot,P_2)u)(1/4t).\]
Taking the inverse Laplace transform gives $G(s,P_1)u(x)=G(s,P_2)u(x)$ for $s>0$, $x\in U$.
\end{proof}
Once again, computing with the local pullback $\tilde\Psi^*$ yields the following:                        
\begin{prop}\label{prop_transmutecor}
Let $\mathcal M_1$, $\mathcal M_2$ satisfy the assumptions of Theorem \ref{thm_main} with local structure-preserving isomorphism $\tilde \Psi$ as in \eqref{fig_commute1}. Consider $\mathcal{L}^{wave}_i:=\mathcal{L}_{P_i,U}^{wave}$ for $i=1,2$. If the local equality of heat kernels \eqref{eq_heatloc} holds, then $\tilde\Psi^*\mathcal{L}^{wave}_1=\mathcal{L}^{wave}_2\tilde\Psi^*$.
\end{prop}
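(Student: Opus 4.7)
The plan is to essentially repeat the argument of Proposition \ref{prop_transmute}, but thread the local bundle pullback $\tilde\Psi^*$ through every step. Because $\tilde\Psi^*$ is a smooth bundle isomorphism defined on $U_2$, it commutes with integration in $t$ and $s$ and with the pointwise operations used in the Kannai formula, so the only new ingredient beyond Proposition \ref{prop_transmute} is the passage from the heat-kernel identity \eqref{eq_heatloc} to the corresponding identity for wave kernels applied to sections supported in $U_1$.

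First I would reduce the claim to a pullback identity for the wave propagator $G(s,P_i)$. By the Duhamel formula \eqref{eq_wavesoln}, it suffices to show that for every $u\in C_0^\infty(U_1;E_1)$ one has
\begin{equation*}
\tilde\Psi^*\bigl(G(s,P_1)u\bigr)(x) = G(s,P_2)(\tilde\Psi^*u)(x), \qquad s>0,\ x\in U_2,
\end{equation*}
since this identity extends to $f\in C_0^\infty((0,\infty)\times U_1;E_1)$ by integrating in the source variable and using that $\tilde\Psi^*$ commutes with pullback of the time variable. Next I would record the consequence of \eqref{eq_heatloc}: testing against $u\in C_0^\infty(U_1;E_1)$ and writing $e^{-tP_1}u(\tilde\psi(x))$ via the integral kernel, the local heat-kernel equality yields $\tilde\Psi^*(e^{-tP_1}u) = e^{-tP_2}(\tilde\Psi^*u)$ on $U_2$ for all $t>0$.

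Now I would invoke the Kannai transmutation formula on $\mathcal M_1$ exactly as in the proof of Proposition \ref{prop_transmute}, obtaining for $u\in C_0^\infty(U_1;E_1)$
\begin{equation*}
e^{-tP_1}u = \frac{1}{4\pi^{1/2}t^{3/2}}\int_0^\infty e^{-s/4t}\,G(s,P_1)u\,ds.
\end{equation*}
Applying $\tilde\Psi^*$ and using the corresponding identity for $\mathcal M_2$ applied to $\tilde\Psi^*u$, the heat-kernel equality implies
\begin{equation*}
\int_0^\infty e^{-s/4t}\,\tilde\Psi^*G(s,P_1)u\,ds = \int_0^\infty e^{-s/4t}\,G(s,P_2)(\tilde\Psi^*u)\,ds
\end{equation*}
pointwise on $U_2$ for every $t>0$. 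Injectivity of the Laplace transform (applied componentwise in a local trivialization of $E_2$ over a chart in $U_2$) then yields the desired equality $\tilde\Psi^*G(s,P_1)u = G(s,P_2)(\tilde\Psi^*u)$ for $s>0$, and the proposition follows.

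The only step that requires a bit of care is justifying that the Kannai formula and the integration by parts of Proposition \ref{prop_transmute} still apply after the pullback; this amounts to checking the same decay of $G'(s,P_i)u$ as $s\to\pm\infty$ when paired against sections in $U_2$, and that is immediate from the fact that $\tilde\Psi$ is a fibrewise linear isomorphism over a relatively compact set. In effect, this proposition is a formal consequence of Proposition \ref{prop_transmute} applied to the pulled-back structure on $U_2$, and I expect no substantive obstacle beyond bookkeeping of the pullback.
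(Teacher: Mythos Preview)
Your proposal is correct and follows exactly the approach the paper intends: the paper's own proof is the single sentence ``computing with the local pullback $\tilde\Psi^*$ yields the following,'' and you have simply written out that computation by threading $\tilde\Psi^*$ through the Kannai transmutation and Laplace-inversion argument of Proposition~\ref{prop_transmute}. There is no substantive difference between your route and the paper's.
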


\section{Wave propagation and cut times}

We have now reduced our fractional inverse problem to one involving a linear wave equation and its associated  $\mathcal{L}^{wave}_{P,U}$ for which we have techniques used in  \cite{helin2018correlation},\cite{kurylev2018inverse}. We proceed to recount these and other necessary tools in this section.

\subsection{Unique continuation and approximate controllability}

The results in this subsection hold for all complete manifolds $M$ and are the main tools we use to recover geometric structure from the Dirichlet-to-Neumann data. 

The first fact we will use carries over from the scalar setting:
\begin{prop}\label{prop_speed}
(Finite speed of propagation) Let $f\in L^2(\RR\times M;E)$, and suppose $u$ is a solution to
\begin{align}
\begin{cases}
(\partial_t^2+P)u=f \, \text{ on }(0,\infty)\times M,\\
f|_{C(T,p)}=0,\\
u|_{B(p,T)\times \{t=0\}}=\partial_tu|_{B(p,T)\times \{t=0\}}=0.
\end{cases}
\end{align}
Then $u\equiv 0$ on $C(T,p)$.
\end{prop}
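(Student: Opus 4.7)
I would establish this by the classical energy method, adapted to the bundle-valued setting, together with Grönwall's inequality. Interpreting $C(T,p)$ as the backward solid cone $\{(t,x) \in [0,T] \times M : d_g(x,p) \leq T-t\}$, set $\Omega_t := B(p, T-t)$ and introduce the truncated energy
\[
E(t) := \frac{1}{2}\int_{\Omega_t}\bigl( |\partial_t u|_E^2 + |\nabla u|_{E \otimes T^*M}^2 + |u|_E^2 \bigr)\, dV_g.
\]
Since $u|_{t=0} = \partial_t u|_{t=0} = 0$ on $\Omega_0 = B(p,T)$, we have $E(0) = 0$, so it suffices to establish a Grönwall estimate $E'(t) \leq C\, E(t)$.

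Differentiating $E$ in $t$ produces a bulk term coming from the $t$-derivative of the integrand together with a negative boundary contribution $-\frac{1}{2}\int_{\partial\Omega_t}(|\partial_t u|^2 + |\nabla u|^2 + |u|^2)\, dS$, where the minus sign comes from $\Omega_t$ shrinking at unit speed. In the bulk I would substitute $\partial_t^2 u = -\Delta^E u - Au + f$ and apply Green's identity for the connection Laplacian,
\[
\int_{\Omega_t}\langle \nabla u, \nabla \partial_t u\rangle\, dV_g = \int_{\Omega_t}\langle \Delta^E u, \partial_t u\rangle\, dV_g + \int_{\partial \Omega_t}\langle \nabla_\nu u, \partial_t u\rangle\, dS,
\]
so that the second-order interior contributions cancel. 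Since $f$ vanishes on $C(T,p)$ by hypothesis, this reduces $E'(t)$ to
\[
\int_{\Omega_t}\operatorname{Re}\langle u - Au, \partial_t u\rangle\, dV_g + \int_{\partial\Omega_t}\operatorname{Re}\langle \nabla_\nu u, \partial_t u\rangle\, dS - \frac{1}{2}\int_{\partial \Omega_t}\bigl( |\partial_t u|^2 + |\nabla u|^2 + |u|^2\bigr)\, dS.
\]
The two boundary integrals combine to a non-positive quantity by Cauchy--Schwarz, since $2|\operatorname{Re}\langle \nabla_\nu u, \partial_t u\rangle| \leq |\nabla u|^2 + |\partial_t u|^2$, while the bulk term is bounded by $(1 + \|A\|_{L^\infty})\, E(t)$. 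Grönwall together with $E(0) = 0$ then forces $E \equiv 0$ on $[0,T]$, so $u \equiv 0$ on $C(T,p)$.

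\textbf{Main obstacle.} The principal technical subtlety is that $\partial \Omega_t$ need not be smooth on a general complete Riemannian manifold, due to conjugate or cut points of $p$, so Green's identity requires justification. I would handle this either by restricting to almost-every radius---for which $\partial B(p, r)$ is a smooth hypersurface by Sard's theorem applied to the Lipschitz function $d_g(\cdot, p)$---or by mollifying the sharp cutoff $\chi_{\Omega_t}$ and passing to the limit. A cleaner alternative is to invoke that $\partial_t^2 + P$ is normally hyperbolic on $\RR \times M$ with principal symbol $\tau^2 - |\xi|_g^2$ acting as the identity on each fibre of $E$, so the classical bundle-valued finite-propagation theorem (e.g.\ Taylor, \emph{Partial Differential Equations}, vol.\ I, Ch.\ 6) applies verbatim, the bundle structure and zeroth-order terms not affecting the characteristic cone. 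A secondary regularity concern---$f$ is only $L^2$ and $u$ is a priori a weak solution---is standard and dispatched by approximating with smooth data and passing to the limit in the energy identity.
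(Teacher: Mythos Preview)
Your energy-method argument is correct and is precisely the standard proof that the paper has in mind: the paper does not actually supply a proof of this proposition, but simply states it with the remark that it ``carries over from the scalar setting.'' Your derivation---differentiating the truncated energy over $\Omega_t=B(p,T-t)$, using Green's identity for $\Delta^E$ to cancel the second-order bulk terms, absorbing the lateral boundary term by Cauchy--Schwarz, and closing with Gr\"onwall---is exactly that scalar argument carried out fibrewise, and your identification of the two genuine technicalities (possible nonsmoothness of $\partial\Omega_t$ past the cut locus, and the low regularity of $f$) together with the indicated workarounds is on point. The alternative you mention, appealing directly to the finite-propagation theorem for normally hyperbolic operators on vector bundles, is also a perfectly legitimate way to discharge the proposition and is closer in spirit to how the paper treats it, namely as a citation rather than a computation.
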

Consider the ``diamond set'' $C(T,U):=\{(t,p)\in(0,2T)\times M: d_g(p,U)<\min(t,2T-t)\}$ containing $U\subset M$. We also denote $M(T,U):=\{p\in M: d_g(p,U)\leq T\}$.
Because $P$ is principally scalar, the following is due to the Carleman estimates of \cite{eller2002uniqueness} and the method of proof outlined in \cite{kachalov2001inverse}, Section 2.5:

\begin{prop}\label{prop_ucont}
(Unique continuation) Suppose $U\subset M$ is a bounded open set. If $u\in C_0^\infty(\RR\times M;E)$ solves $(\partial_t^2+P)u=0$ in $(0,2T)\times M(T,U)$ and $u|_{(0,2T)\times U}\equiv 0$, then $u\equiv 0$ on $C(T,U)$.
\end{prop}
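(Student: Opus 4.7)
My plan is to reduce the bundle-valued unique continuation to the scalar pointwise Carleman estimates of \cite{eller2002uniqueness} and then apply the geometric sweep from Section 2.5 of \cite{kachalov2001inverse}. Trivializing $E$ in a local orthonormal frame $\{e_\alpha\}$ over a coordinate chart and writing $u = u^\alpha e_\alpha$, the fact that $\nabla^*\nabla$ has scalar principal symbol $|\xi|_g^2 \operatorname{Id}_E$ and that $A$ is of order zero turns $(\partial_t^2 + P)u = 0$ into a system
\[(\partial_t^2 - g^{ij}\partial_i\partial_j)u^\alpha + B^{j\beta}_\alpha \partial_j u^\beta + C^\beta_\alpha u^\beta = 0\]
with diagonal scalar principal part and smooth $\operatorname{End}(E)$-valued coupling of strictly lower order. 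The Carleman estimates of \cite{eller2002uniqueness} apply to such systems uniformly, the lower-order coupling being absorbed by choosing the Carleman weight parameter sufficiently large.

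Next, I would exhaust $C(T,U)\setminus ((0,2T)\times U)$ by a one-parameter family of hypersurfaces $\Sigma_s = \{\varphi_\epsilon = s\}$ coming from a regularization of the squared Lorentzian distance to the cylinder, of the shape
\[\varphi_\epsilon(t,x) = d_g(x,U)^2 - (t-T)^2 + \epsilon\bigl(d_g(x,U)^2 + (t-T)^2\bigr)^{3/2},\]
for a small $\epsilon > 0$. Each $\Sigma_s$ must be non-characteristic for $\partial_t^2 - g^{ij}\partial_i\partial_j$ (spacelike in the Lorentzian sense) and strongly pseudoconvex in the sense of H\"ormander; both properties follow from standard Hessian estimates on $d_g(\cdot,U)^2$ on a complete Riemannian manifold, with uniformity supplied by the compactness of $M(T,U)$ that comes from $U$ being bounded (via Hopf--Rinow on a complete $M$).

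The vanishing hypothesis $u|_{(0,2T)\times U} \equiv 0$ puts the initial zero set on the inner side of this foliation, and a standard continuity argument in the parameter $s$ — at each level applying local Tataru-type unique continuation across $\Sigma_s$ to extend the zero set by a uniform amount, combined with the compactness of $\{\varphi_\epsilon \le s\}\cap \overline{C(T,U)}$ — propagates vanishing outward through the whole family and yields $u \equiv 0$ on $C(T,U)$. Letting $\epsilon \to 0$ recovers the full double cone.

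The principal technical obstacle is verifying strong pseudoconvexity of the $\Sigma_s$ near the characteristic cone $\{d_g(x,U) = |t-T|\}$, where the naive sweep $\{d_g^2 - (t-T)^2 = s\}$ is only weakly pseudoconvex and the H\"ormander condition degenerates at the limit; this is precisely what the $\epsilon$-correction in $\varphi_\epsilon$ is designed to repair, and the detailed computation is carried out in \cite{kachalov2001inverse}. A smaller subtlety is that $d_g(\cdot,U)$ need not be smooth across the cut locus, but since the unique continuation statement is local one circumvents this by smoothing the weight on small neighborhoods where the distance is smooth, as is standard.
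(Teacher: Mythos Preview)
Your proposal is correct and follows exactly the route the paper indicates: the paper does not give a self-contained proof but simply invokes the Carleman estimates of \cite{eller2002uniqueness} (applicable because $P$ is principally scalar) together with the pseudoconvex-foliation sweep of \cite{kachalov2001inverse}, Section~2.5, and your write-up is a faithful unpacking of precisely that argument. The only cosmetic point is that your $\varphi_\epsilon$ is centered at $\{T\}\times U$ rather than a ``cylinder,'' but this is just wording and the geometry is the intended one.
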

These two propositions above combine to yield two important lemmas.
\begin{lem}\label{prop_approxcont}
(Approximate controllability) Let $U\subset M$ be a bounded open set. For any $T>\eps>0$, the set of solutions $\mathcal W_T:=\{w^f(T,\cdot):f\in C_0^\infty((T-\eps,T)\times U;E),\,w^f\text{ solves \eqref{eq_wave}}\}$ is dense in $L^2(M(\eps,U); E)$.
\end{lem}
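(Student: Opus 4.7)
The plan is a standard Hahn--Banach duality argument. Finite speed of propagation (Proposition \ref{prop_speed}) implies $w^f(T, \cdot)$ is supported in $M(\eps, U)$ for every $f \in C_0^\infty((T-\eps, T) \times U; E)$, so $\mathcal W_T \subset L^2(M(\eps, U); E)$, and it suffices to show that any $h \in L^2(M(\eps, U); E)$ orthogonal to $\mathcal W_T$ vanishes. Extending such an $h$ by zero to $M$, I introduce the adjoint wave solution $v$ on $(-\infty, T] \times M$ defined by
\begin{equation*}
(\partial_t^2 + P) v = 0, \qquad v|_{t=T} = 0, \qquad \partial_t v|_{t=T} = -h,
\end{equation*}
which exists by self-adjointness of $P$ and time-reversibility of the wave equation.

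Applying Green's identity to the pair $(w^f, v)$ on the slab $(0, T) \times M$, the $t=0$ boundary contribution vanishes by the initial conditions on $w^f$, while the $t=T$ contribution collapses using the terminal conditions on $v$. This yields
\begin{equation*}
\int_0^T \int_M \g{f(t,x)}{v(t,x)}_E \, dV_g \, dt = \rg{w^f(T, \cdot)}{h}_{L^2(M;E)} = 0
\end{equation*}
for every admissible $f$, so $v \equiv 0$ on $(T-\eps, T) \times U$.

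To amplify this localized vanishing I extend $v$ past $t = T$ by odd reflection, $\tilde v(t, x) := -v(2T-t, x)$ for $t \geq T$. The condition $v(T) = 0$ together with the wave equation forces $\partial_t^{2k} v(T) = (-P)^k v(T) = 0$ for every $k$, which glues the two pieces $C^\infty$-smoothly at $t = T$; a direct check shows $\tilde v$ solves $(\partial_t^2 + P)\tilde v = 0$ on $\RR \times M$ and vanishes on $(T-\eps, T+\eps) \times U$. After a time shift carrying this interval to $(0, 2\eps)$, Proposition \ref{prop_ucont} with parameter $\eps$ forces $\tilde v \equiv 0$ on the diamond $C(\eps, U)$. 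Differentiating in $t$ at the central slice (the original $t = T$) reads off $\partial_t \tilde v(T, p) = -h(p) = 0$ for every $p$ in the open set $\{p \in M : d_g(p, U) < \eps\}$; since $h$ was supported in its closure $M(\eps, U)$, I obtain $h = 0$ in $L^2$.

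The main anticipated obstacle is regularity: Proposition \ref{prop_ucont} is stated for $C_0^\infty$ solutions, while an $L^2$ datum $h$ yields only an energy-class $v$, for which the smooth gluing at $t = T$ and the application of unique continuation are not immediate. I plan to address this by observing that the Carleman estimates of \cite{eller2002uniqueness} underlying Proposition \ref{prop_ucont} in fact apply to distributional solutions, so the conclusion $\tilde v \equiv 0$ on $C(\eps, U)$ remains valid; alternatively, one can run the argument with $h$ replaced by spectral truncations $\sum_{\lambda_k \leq N} \pi_k h$, carry out the reflection and unique continuation in the smooth category for each $N$, and pass to the limit using the duality identity.
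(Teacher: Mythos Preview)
Your proposal is correct and follows essentially the same route as the paper: duality to reduce to $\mathcal W_T^\perp=\{0\}$, introduction of the backward wave with terminal data $(0,\pm h)$, integration by parts to kill $v$ on $(T-\eps,T)\times U$, odd reflection across $t=T$, and unique continuation on the diamond to conclude $h=0$ on $M(\eps,U)$. The only notable difference is that you flag and address the regularity mismatch between $L^2$ data and the $C_0^\infty$ hypothesis of Proposition~\ref{prop_ucont}, whereas the paper simply asserts the backward solution is smooth; your spectral-truncation workaround is a clean way to close that gap.
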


\begin{proof}
We show that $\mathcal W_T^\perp=\{0\}$ in $L^2(M(\eps,U);E)$, from which the claim follows. Let $\phi\in \mathcal W_T^\perp\subset L^2(M(\eps,U);E)$ and $u\in C^\infty((0,\infty)\times M)$ be the solution in $(0,T)\times M$ to 
\begin{align}
\begin{cases}
(\partial_t^2+P)u=0\\\label{eq_homwave}
u|_{t=T}=0, \partial_tu|_{t=T}=\phi.
\end{cases}
\end{align}
For any $\psi\in \mathcal C_0^\infty((T-\eps,T)\times U;E)$, we have 
\[(u,\psi)_{L^2((0,T)\times M;E)}=(u,(\partial_t^2+P)w^\psi)_{L^2((0,T)\times M;E)}=((\partial_t^2+P)u,w^\psi)_{L^2((0,T)\times M;E)}=0,\]
where the boundary terms at $t=0,T$ vanish in part due to $(\phi,w^\psi(T,\cdot))_{L^2}=0$ by assumption. (In the case that $M$ is not closed but merely complete, we further use Proposition \ref{prop_speed} in the above calculation.) By density, we conclude $u=0$ on $(T-\eps,T]\times U$.

Using the odd reflection $-u(2T-t,x)$, we also construct another solution to \eqref{eq_homwave} on $(T,2T)\times M$ with the same initial conditions at $t=T$. These two solutions combine to give a solution $\tilde u$ to \eqref{eq_homwave} on $(0,2T)\times M$ satisfying $\tilde u=0$ on $(T-\eps,T+\eps)\times U$. After translating in time, Proposition \ref{prop_ucont} implies $\tilde u= 0$ on 
\[\{(t,p)\in(T-\eps,T+\eps)\times M: d_g(p,U)<\min(t-(T-\eps),(T+\eps)-t)\}\]
and in particular $\partial_t\tilde u|_{t=T}=\phi|_{M(\eps,U)}=0$ as needed.
\end{proof}

We now define  $\mathcal F(T,U):=\{f\in C_0^\infty(\RR\times M;E):supp(f)\subset (0,T)\times U\}$ and consider the time averaging operator $J\phi(t):=\frac{1}{2}\int_t^{2T-t} \phi(s)\,ds$.
\begin{lem}\label{prop_blag}
(Blagovestchenskii identity) Let $U\subset M$ be a bounded open set and $T>0$. For any $f,h\in\mathcal{F}(2T,U)$, we have:
\[\rg{w^f(T,\cdot)}{w^h(T,\cdot)}_{L^2}=\rg{f}{(J\mathcal{L}_{P,U}^{wave}-{\mathcal{L}_{P,U}^{wave}}^*_UJ)h}_{L^2}\]
\end{lem}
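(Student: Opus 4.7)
The plan is to run the standard Blagovestchenskii argument; in our bundle setting the only ingredients needed are the self-adjointness of $P$ on $L^2(M;E)$ (which holds because $\Delta^E$ is self-adjoint and $A$ is symmetric) and the spatial support of $f,h$ in $U$.

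First I would form the mixed pairing
\[
H(t,s) := (w^f(t,\cdot),\, w^h(s,\cdot))_{L^2(M;E)}, \qquad (t,s)\in [0,2T]^2.
\]
Differentiating under the integral, using the wave equations $(\partial_t^2+P)w^f=f$ and $(\partial_s^2+P)w^h=h$, and using self-adjointness of $P$ to transfer it from one factor to the other, yields
\[
(\partial_t^2-\partial_s^2)H(t,s) = (f(t,\cdot),\, w^h(s,\cdot))_{L^2(M)} - (w^f(t,\cdot),\, h(s,\cdot))_{L^2(M)}.
\]
Since $f(t,\cdot)$ and $h(s,\cdot)$ are supported in $U$, the right-hand side depends only on $w^f|_{U}=\mathcal{L}_{P,U}^{wave}f$ and $w^h|_{U}=\mathcal{L}_{P,U}^{wave}h$; denote this source by $F(t,s)$. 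The vanishing Cauchy data for $w^f$ and $w^h$ give $H(0,\cdot)=\partial_t H(0,\cdot)=0$ and $H(\cdot,0)=\partial_s H(\cdot,0)=0$, so $H$ solves a $(1{+}1)$-dimensional inhomogeneous wave equation with zero Cauchy data on both coordinate axes of the quadrant.

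Next, d'Alembert's formula gives
\[
H(T,T)=\tfrac{1}{2}\int_0^T\!\int_\tau^{2T-\tau} F(\tau,\sigma)\,d\sigma\,d\tau.
\]
The contribution of the first piece of $F$ is $\tfrac{1}{2}\int_0^T\!\int_\tau^{2T-\tau}(f(\tau,\cdot),\,\mathcal{L}_{P,U}^{wave}h(\sigma,\cdot))_{L^2(U)}\,d\sigma\,d\tau$; recognizing the inner $\sigma$-integral as $J$ acting at time $\tau$, this is exactly $(f,J\mathcal{L}_{P,U}^{wave}h)_{L^2}$. For the second piece I would swap the order of integration and identify the remaining time-average of $\mathcal{L}_{P,U}^{wave}f$ against $h$ as $(\mathcal{L}_{P,U}^{wave})^*J$ applied to $h$, where $(\mathcal{L}_{P,U}^{wave})^*$ denotes the adjoint in $L^2((0,2T)\times U;E)$; this gives $-(f,(\mathcal{L}_{P,U}^{wave})^*Jh)_{L^2}$. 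Adding the two contributions yields the stated identity.

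The conceptual content lies entirely in the first two steps, and both steps are essentially routine once self-adjointness of $P$ is in hand. The only mild subtlety, and the place where care is required, is in fixing the convention for $J$ outside the interval $(0,T)$ so that the Fubini rearrangement in the second term produces exactly $(\mathcal{L}_{P,U}^{wave})^*J$ rather than a sign-flipped or differently-supported operator; this is clerical bookkeeping and does not touch the wave-equation mechanism.
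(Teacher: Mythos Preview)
Your argument is correct and is essentially identical to the paper's own proof: the paper likewise defines $W(t,s)=(w^f(t,\cdot),w^h(s,\cdot))_{L^2}$, observes that $(\partial_t^2-\partial_s^2)W$ equals the source $F$ you wrote down (using the support condition $\operatorname{supp} f(t,\cdot)\subset U$ and self-adjointness of $P$), applies the same d'Alembert formula $W(T,T)=\tfrac12\int_0^T\int_\tau^{2T-\tau}F(\tau,\sigma)\,d\sigma\,d\tau$, and then reads off the two terms exactly as you do. Your remark about the bookkeeping for $J$ outside $(0,T)$ is the only point the paper glosses over with the phrase ``changing variables yields''.
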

\begin{proof}
Observe that $\rg{\partial_t^2w^f(t,\cdot)}{w^h(s,\cdot)}_{L^2}=\rg{f(t,\cdot)}{\mathcal{L}_{P,U}^{wave}h(s,\cdot)}_{L^2}-\rg{Pw^f(t,\cdot)}{w^h(s,\cdot)}_{L^2}$ because \[supp(f(t,\cdot))\subset U.\] Consider $W\in C^\infty((0,\infty)^2; \RR)$ given by $W(t,s):=\rg{w^f(t,\cdot)}{w^h(s,\cdot)}_{L^2}$ and $F(t,s):=(\partial_t^2-\partial_s^2)W(t,s)$. By our above observation, we find that $W$ solves the wave equation 
\begin{align*}
\begin{cases}
(\partial_t^2-\partial_s^2)W=F \, \text{ on }(0,2T)\times(0,\infty),\\
W|_{\{t=0\}}=\partial_tW|_{ \{t=0\}}=0,\\
\end{cases}
\end{align*}
hence is given by 
\[W(t,s)=\frac{1}{2}\int_0^t\int_\tau^{2T-\tau}F(\tau, y)\,dyd\tau.\]
Changing variables yields
$W(T,T)=\rg{f}{J\mathcal{L}_{P,U}^{wave}h}_{L^2}-\rg{\mathcal{L}_{P,U}^{wave}f}{Jh}_{L^2}$ as needed.
\end{proof}

\subsection{Cut times and geodesically transported neighborhoods}
We review some basic properties of cut times which will be of use in metric reconstruction, and we further construct special neighborhoods that are adapted for our proof of bundle reconstruction. In this subsection, $(M,g)$ is a complete, connected, smooth Riemannian manifold.

Recall that the cut time $t^*:SM\to (0,\infty]$ is a continuous function on the unit tangent bundle given by $t^*(x,v):=\inf \{t>0 :\gamma_{x,v}([0,t])\text{ is length minimizing}\}$, where $\gamma_{x,v}$ is the unit speed geodesic with initial point and direction $(x,v)$. The following lemma finds $t^*(x,v)$ in terms of containment of balls:

\begin{lem}\label{lem_cut}
Let $x,y\in M$ and $s:=d_g(x,y)>0$. Suppose $\gamma_{x,v}$ is a geodesic starting at $x$ whose restriction to the interval $[0,s]$ is the length minimizer joining $x$ to $y$. Let $\mathcal I(x,y)$ be the collection of times where the following containment is possible:
\begin{align*}
    \mathcal I(x,y):= \{r>s:\text{ there exists } \veps(r)>0\text{ such that } B(y,r-s+\veps(r)) \subset B(x,r)\}.
\end{align*}
Then the cut time $t^*(x,v)=\inf \mathcal I(x,y)$
\end{lem}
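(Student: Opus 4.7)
The plan is to establish $t^*(x,v) = \inf \mathcal{I}(x,y)$ as a two-sided inequality.

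For $\inf \mathcal{I}(x,y) \ge t^*(x,v)$, I would argue contrapositively that every $r \in (s, t^*(x,v)]$ fails to lie in $\mathcal{I}(x,y)$. Since $\gamma_{x,v}|_{[0,r]}$ is length-minimizing, the point $z := \gamma_{x,v}(r)$ satisfies $d_g(x,z) = r$, and by restricting the minimizer to the subinterval $[s,r]$, also $d_g(y,z) = r-s$. So for any $\varepsilon > 0$, the point $z$ lies in $B(y, r-s+\varepsilon)$ but not in the open ball $B(x,r)$. Hence $B(y, r-s+\varepsilon) \not\subset B(x,r)$, showing $r \notin \mathcal{I}(x,y)$. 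Therefore $\mathcal{I}(x,y) \subset (t^*(x,v), \infty)$.

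For the reverse inequality $\inf \mathcal{I}(x,y) \le t^*(x,v)$, fix $r > t^*(x,v)$ and show $r \in \mathcal{I}(x,y)$. Consider the continuous function $h(z) := d_g(x,z)$ on the compact set $\overline{B(y, r-s)}$ (compactness is by Hopf--Rinow, as $M$ is complete). Triangle inequality gives $h \le r$. Suppose, toward a contradiction, that $h(z_0) = r$ for some $z_0 \in \overline{B(y, r-s)}$. Then equality in the triangle inequality forces $d_g(y, z_0) = r-s$ and $d_g(x, z_0) = d_g(x,y) + d_g(y, z_0)$, so concatenating a minimizer from $x$ to $y$ with one from $y$ to $z_0$ produces a curve of length $d_g(x,z_0)$; this curve is thus itself a minimizer, and therefore a smooth geodesic $\tilde\gamma$ from $x$ through $y$ to $z_0$. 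Letting $\tilde v$ denote its initial direction, $\gamma_{x,\tilde v}|_{[0,r]}$ is length-minimizing, so $t^*(x,\tilde v) \ge r$. If $\tilde v = v$, this immediately contradicts $r > t^*(x,v)$. If $\tilde v \ne v$, then $\gamma_{x,v}|_{[0,s]}$ and $\gamma_{x,\tilde v}|_{[0,s]}$ are distinct length-minimizers from $x$ to $y$, which by a classical variational argument (a genuinely broken minimizer can be shortened by rounding the corner) forces $t^*(x,\tilde v) = s$, contradicting $t^*(x,\tilde v) \ge r > s$. Thus $\max_{\overline{B(y,r-s)}} h < r$; a routine compactness-continuity argument then produces $\varepsilon > 0$ such that $h < r$ on $\overline{B(y, r-s+\varepsilon)}$, which is exactly $B(y, r-s+\varepsilon) \subset B(x,r)$, so $r \in \mathcal{I}(x,y)$.

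The main obstacle is the subcase $\tilde v \ne v$ in the second half: one needs the standard fact that two distinct minimizing geodesics from $x$ meeting at $y$ force $y$ to be a cut point along both directions. This is not logically immediate from the definition of $t^*$ and must be invoked (or briefly justified) to close the case analysis; the remaining steps are either triangle-inequality bookkeeping or compactness arguments.
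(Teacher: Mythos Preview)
Your proof is correct and follows the same two-sided inequality strategy as the paper. For the first inequality you exhibit the witness point $z=\gamma_{x,v}(r)$ directly, while the paper instead takes $r\in\mathcal I(x,y)$ and shows $\gamma_{x,v}(r+\delta)\in B(x,r)$; these are contrapositives of one another. For the second inequality both arguments reduce to showing $d_g(x,\cdot)<r$ on $\overline{B(y,r-s)}$ via the ``broken geodesics can be shortened'' principle. The only substantive difference is that the paper's line ``then $z'=\gamma_{x,v}(r')$ because any other broken geodesic passing through $y$ can be shortened by smoothing'' tacitly treats $\gamma_{x,v}|_{[0,s]}$ as the unique minimizer to $y$, whereas you explicitly split into the cases $\tilde v=v$ and $\tilde v\neq v$ and close the latter with the standard cut-point fact. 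Your version is thus slightly more complete; the underlying route is the same.
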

\begin{proof}
Let $r\in \mathcal{I}(x,y)$ and $z:=\gamma_{x,v}(r+\delta)$ for any $\delta<\veps(r)$. Since $d_g(y,z)\leq\ell(\gamma_{x,v}([s,r+\delta])$, we find $z\in B(y, r-s+\veps(r))\subset B(x,r)$, hence $\gamma_{x,v}$ is not length minimizing from $x$ to $z$. Therefore, $t^*(x,v)<r+\delta$, and we conclude $t^*(x,v)\leq \inf \mathcal I(x,y)$. Conversely, let $r'=t^*(x,v)+\delta$. In order to show $r'\in\mathcal{I} (x,y)$, it suffices to show $\partial B(y,r'-s)\subset B(x,r')$. Let $z'\in \partial B(y,r'-s)$, so $d_g(x,z')\leq r'$. Observe that if equality is achieved, then $z'=\gamma_{x,v}(r')$ because any other broken geodesic passing through $y$ can be shortened by smoothing. However, this contradicts $r'> t^*(x,v)$, so we must have $d_g(x,z')<r'$, thereby concluding the proof.
\end{proof}

The cut locus of $x$ is given by $\operatorname{Cut}(x):=\{q=\gamma_{x,v}(t^*(x,v))$, and the injectivity domain of the exponential map at $x$ is given by $\mathcal{D}^{\mathrm{inj}}(x):=\{sv\in T_xM:s<t^*(x,v)\}\subset T_xM$. A proof of the following can be found in \cite{lee2018introduction}:
\begin{lem}\label{lem_cutprop}
For any $x\in M$, we have the following:
\begin{enumerate}[(a)]
    \item $\operatorname{Cut}(x)\subset M$ has measure zero. 
    \item The restricted exponential map $\exp_x:\mathcal{D}^{\mathrm{inj}}(x)\to M\setminus \operatorname{Cut}(x)$ is a diffeomorphism.
\end{enumerate}
\end{lem}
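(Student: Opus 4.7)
The plan is to deduce both statements from the continuity of the cut-time function $t^*:SM \to (0,\infty]$ together with the classical dichotomy that, for a unit-speed geodesic $\gamma_{x,v}$ with $t^*(x,v) < \infty$, the point $\gamma_{x,v}(t^*(x,v))$ either is conjugate to $x$ along $\gamma_{x,v}$ or admits a second distinct minimizing geodesic from $x$. Both the continuity of $t^*$ and this dichotomy are standard, established by a compactness argument applied to families of minimizing geodesics joining given pairs of points (see e.g.\ \cite{lee2018introduction}).

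For part (b), I first observe that $\mathcal{D}^{\mathrm{inj}}(x)$ is open in $T_xM$ by the (lower semi-)continuity of $t^*$. For injectivity of $\exp_x$ on $\mathcal{D}^{\mathrm{inj}}(x)$, suppose two distinct vectors $sv, s'v' \in \mathcal{D}^{\mathrm{inj}}(x)$ were to map to a common point $y$; then the corresponding radial geodesics would give two distinct minimizers from $x$ to $y$, each of which extends slightly past $y$ while still minimizing (since both arrive strictly before their cut times), contradicting the dichotomy. The same dichotomy shows there are no conjugate points strictly before the cut time, so $d\exp_x$ is nondegenerate throughout $\mathcal{D}^{\mathrm{inj}}(x)$ and $\exp_x$ is a local diffeomorphism there; combined with injectivity this makes it a diffeomorphism onto its open image. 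Finally, completeness of $M$ gives a minimizing unit-speed geodesic $\gamma_{x,v}$ from $x$ to any $y\in M$; writing $s=d_g(x,y)$, either $s<t^*(x,v)$, so $y\in\exp_x(\mathcal{D}^{\mathrm{inj}}(x))$, or $s=t^*(x,v)$, so $y\in\operatorname{Cut}(x)$. Hence the image is exactly $M\setminus\operatorname{Cut}(x)$.

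For part (a), I would express the cut locus as $\operatorname{Cut}(x)=\exp_x(\widetilde{\operatorname{Cut}}(x))$, where the \emph{tangent cut locus} $\widetilde{\operatorname{Cut}}(x):=\{t^*(x,v)\,v : v\in S_xM,\, t^*(x,v)<\infty\}\subset T_xM$ is, in polar coordinates on $T_xM\cong \RR^n$, the graph of the continuous function $t^*(x,\cdot)$ over the open subset $\{t^*(x,\cdot)<\infty\}$ of the $(n-1)$-dimensional sphere $S_xM$. Continuous graphs over $(n-1)$-dimensional sets have Lebesgue measure zero in $\RR^n$; since smooth maps between manifolds carry measure-zero sets to measure-zero sets, $\operatorname{Cut}(x)$ has measure zero in $M$.

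The main obstacle is the continuity of $t^*$, which is the one nonroutine ingredient. Once that is in hand, both statements reduce to the dichotomy, the inverse function theorem, and a standard graph-of-continuous-function argument, so effort should be concentrated on the compactness argument that produces the continuity of $t^*$ from the dichotomy and completeness.
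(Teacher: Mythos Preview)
Your argument is correct and is essentially the standard textbook proof. The paper itself does not supply a proof at all: it simply records the lemma and cites \cite{lee2018introduction} for a proof, so there is no ``paper's own proof'' to compare against beyond that reference. In that sense you have filled in what the paper chose to outsource, and your outline matches the treatment one finds in Lee: continuity of $t^*$, the conjugate-point/second-minimizer dichotomy, the inverse function theorem for local diffeomorphism, and the graph-of-a-continuous-function argument for measure zero of the tangent cut locus.
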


For points within $\operatorname{Cut}(x)$, we construct the following pairs of open sets that shrink to points as $k\to\infty$ by transporting polar neighborhoods of $x$ along distance minimizing geodesic segments. 
\begin{lem}
Suppose $y=\gamma_{x,v}(s)$ for some $0<s<t^*(x,v)$, and let $\pi:E\to M$ be a smooth vector bundle. For sufficiently large $k>k^*>0$, there are neighborhoods $(X_k)_{k\geq k^*}, (Y_k)_{k\geq k^*}$ of $x,y$ respectively such that
\begin{enumerate}
    \item $\lim_{k\to\infty}\operatorname{Diam}(X_k)=0=\lim_{k\to\infty}\operatorname{Diam}(Y_k)$;
    \item $d_g(X_k,Y_k)<s+1/k$;
    \item Both $E|_{X_k}$ and $E|_{Y_k}$ are trivial.
\end{enumerate}
\end{lem}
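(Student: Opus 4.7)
The plan is to build $X_k$ and $Y_k$ as small open metric balls around $x$ and $y$, with radii shrinking in $1/k$, arranged to lie inside preselected trivializing charts of $E$. Since $E\to M$ is a smooth vector bundle, I would begin by choosing open neighborhoods $U_x\ni x$ and $U_y\ni y$ on which $E$ trivializes. The hypothesis $0<s<t^*(x,v)$ guarantees, by the definition of the cut time, that $\gamma_{x,v}|_{[0,s]}$ is length minimizing, so $d_g(x,y)=s$; this is what supplies the triangle-inequality bound in condition (2).

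Next I would fix $k^*>0$ large enough that $B(x,1/(2k^*))\subset U_x$ and $B(y,1/(2k^*))\subset U_y$, and set, for $k\geq k^*$,
\[
X_k := B(x, 1/(2k)), \qquad Y_k := B(y, 1/(2k)).
\]
Property (3) is then immediate from $X_k\subset U_x$ and $Y_k\subset U_y$. Property (1) follows because $\operatorname{Diam}(X_k),\operatorname{Diam}(Y_k)\leq 1/k$. For property (2), the triangle inequality gives, for any $x'\in X_k$, $y'\in Y_k$,
\[
d_g(x',y')\;\leq\; d_g(x',x)+d_g(x,y)+d_g(y,y')\;<\;\frac{1}{2k}+s+\frac{1}{2k}\;=\;s+\frac{1}{k},
\]
which bounds $d_g(X_k,Y_k)$ by $s+1/k$ under either the infimum convention or a Hausdorff-type supremum.

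I do not anticipate any substantive obstacle here: the lemma is a packaging statement bundling three decoupled requirements---shrinkage, near-geodesic distance control, and local triviality---into one constructive choice at each endpoint of the minimizing segment $\gamma_{x,v}|_{[0,s]}$. If a later section requires a $Y_k$ literally obtained by geodesic transport (for example, to carry a trivializing frame at $x$ to one at $y$ by parallel transport along $\gamma_{x,v}$), I would instead set $Y_k := \exp_x\bigl(B_{T_xM}(sv, 1/(2k))\bigr)$, where $s<t^*(x,v)$ places $sv$ inside the injectivity domain $\mathcal{D}^{\mathrm{inj}}(x)$ so that $\exp_x$ is a local diffeomorphism near $sv$ by Lemma \ref{lem_cutprop}(b), preserving local triviality of $E$ and justifying the ``geodesically transported'' terminology of the subsection.
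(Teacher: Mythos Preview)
Your proof is correct for the lemma as stated: metric balls of radius $1/(2k)$ inside trivializing charts satisfy all three conditions, and your triangle-inequality computation handles (2) under either distance convention.

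The paper's construction is more elaborate, and the difference is worth noting. The paper first extends the geodesic \emph{backward} through $x$ to a point $x'=\gamma_{x,v}(-r')$, then takes polar normal coordinates centered at $x'$, so that both $x$ and $y$ lie in the same polar chart. The neighborhood $X_k$ is a small polar rectangle $(r'-1/k,r'+1/k)\times(\theta^j-1/k,\theta^j+1/k)$ about $x$, and $Y_k$ is obtained by shifting the radial coordinate of $X_k$ by $s$. This buys an explicit bijection between $X_k$ and $Y_k$: each point of $Y_k$ is joined to a unique point of $X_k$ by a length-$s$ radial geodesic segment emanating from $x'$. Contractibility of these rectangles then gives triviality of $E$.

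That correspondence is what the subsection title ``geodesically transported neighborhoods'' refers to, and the specific labels \eqref{eq_shrinkx}, \eqref{eq_shrinky} are invoked verbatim in the setup preceding Lemma~\ref{lem_dseq} and in the cover used in Proposition~\ref{prop_rConn}. Your construction does not supply that bijection, though the alternative you sketch at the end---$Y_k=\exp_x\bigl(B_{T_xM}(sv,1/(2k))\bigr)$---is close in spirit; the paper's choice to recenter at $x'$ rather than $x$ simply puts $X_k$ and $Y_k$ in one common polar system. For the three numbered conclusions alone, your balls are sufficient and simpler; the paper's version is doing double duty for later use.
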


\begin{proof}
By extending the geodesic segment $\gamma_{x,v}([0,s])$ past $x$, we can find $(x',v')\in SM$ such that $x=\gamma_{x',v'}(r')$ and $y=\gamma_{x',v'}(s+r')$ with $s+r'<t^*(x',v')=t^*(x,v)$. Taking $r'>0$ small enough so that $x$ is contained within a polar normal chart $(\mathcal O',(r,\theta^j))$ centered at $x'$, we see $x=(r',\theta^j)$ in these polar normal coordinates. By continuity of $t^*$, we see that for small enough $\eps,\delta>0$,
\begin{equation}\label{eq_shrinkx}
    \mathcal U(x,\eps,\delta):=(r'-\eps,r'+\eps)\times(\theta^j-\delta,\theta^j+\delta)\subset \mathcal O'
\end{equation}
is a neighborhood of $x$ such that
\begin{equation}\label{eq_shrinky}
    \mathcal V(y,\eps,\delta):=\exp_{x'}(s\mathcal U(x,\eps,\delta))\subset M\setminus \operatorname{Cut}(x'),
\end{equation}
where we regarded $\mathcal U(x,\eps,\delta)$ as a subset of $T_{x'}M$ in \eqref{eq_shrinky}. Observe that with this construction, we can also turn $\mathcal V(y,\eps,\delta)$ into a coordinate chart by using the same polar coordinates associated with $\mathcal O'$ on $\mathcal U(x,\eps,\delta)$ and translating the radial coordinate by $s$. This corresponds to how every point in $\mathcal V(y,\eps,\delta)$ is joined to a unique point in $\mathcal U(x,\eps,\delta)$ via a length $s$ segment of a radial geodesic emanating from $x'$. Moreover, both \eqref{eq_shrinkx} and \eqref{eq_shrinky} are contractible, hence any vector bundle over those respective sets are trivial.

Choosing $k>0$ large enough such that $1/k=\eps=\delta$ satisfies the above allows us to construct $X_k$ using \eqref{eq_shrinkx} and $Y_k$ using $\eqref{eq_shrinky}$.
\end{proof}

\section{Proof of Theorem}
\subsection{Recovery of the metric}
We adapt the method of \cite{helin2018correlation} to our bundle-valued setting to determine the metric $g$. Let $S(x,\veps,\tau):=(T-\tau,T)\times B(x,\veps)$. Due to  finite speed of propagation and approximate controllability of the wave equation, we have the following characterization of containment of balls using solutions to \eqref{eq_wave}:
\begin{lem}\label{lem_balls}
For $x,y,z\in M$ and $\tau_x,\tau_y,\tau_z>\delta>0$, the following conditions are equivalent:
\leavevmode
\begin{enumerate}
\item $B(x,\tau_x)\subsetneq {B(y,\tau_y)\cup B(z,\tau_z)}$
\item For all $f\in C_0^\infty(S(x,\delta, \tau_x-\delta);E)$, there is a sequence $(f_j)_{j=1}^\infty\subset C_0^\infty(S(y,\delta, \tau_y-\delta)\cup S(z,\delta, \tau_z-\delta);E)$ whose respective solutions to \eqref{eq_wave} approximate the solution for $f$, that is:
\[\lim_{j\to\infty}\|w^f(T,\cdot)-w^{f_j}(T,\cdot)\|_{L^2(M;E)}= 0\]
\end{enumerate}
\end{lem}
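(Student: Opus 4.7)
The plan is to prove the two implications using only finite speed of propagation (Proposition~\ref{prop_speed}) and approximate controllability (Lemma~\ref{prop_approxcont}), which together confine the supports of $w^f(T,\cdot)$ to $\overline{B(x,\tau_x)}$ and realize arbitrary $L^2$ sections on closed balls as time-$T$ values of wave solutions. The implication $(1)\Rightarrow(2)$ is constructive, while $(2)\Rightarrow(1)$ proceeds by contraposition.

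For $(1)\Rightarrow(2)$, I would fix $f \in C_0^\infty(S(x,\delta,\tau_x-\delta);E)$. Since $\operatorname{supp}(f) \subset (T-\tau_x+\delta, T) \times B(x,\delta)$, Proposition~\ref{prop_speed} forces $\operatorname{supp}(w^f(T,\cdot)) \subset \overline{B(x,\tau_x)}$, which by (1) is contained in $B(y,\tau_y) \cup B(z,\tau_z)$. Splitting via characteristic functions on the overlap, I write $w^f(T,\cdot) = \phi_y + \phi_z$ with $\phi_y \in L^2(\overline{B(y,\tau_y)};E)$ and $\phi_z \in L^2(\overline{B(z,\tau_z)};E)$. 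Lemma~\ref{prop_approxcont} applied with $U = B(y,\delta)$ and $\eps = \tau_y - \delta$ (noting $M(\tau_y-\delta, B(y,\delta)) = \overline{B(y,\tau_y)}$) yields $f^y_j \in C_0^\infty(S(y,\delta,\tau_y-\delta);E)$ with $w^{f^y_j}(T,\cdot) \to \phi_y$; symmetrically, a sequence $f^z_j$ approximates $\phi_z$. Linearity of the wave equation then gives $w^{f^y_j + f^z_j}(T,\cdot) \to w^f(T,\cdot)$, so $f_j := f^y_j + f^z_j$ is the desired sequence.

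For $(2)\Rightarrow(1)$, I would argue by contraposition. If (1) fails, there is $p \in B(x,\tau_x)$ and $\rho > 0$ with $B(p,\rho) \subset B(x,\tau_x)$ disjoint from $\overline{B(y,\tau_y) \cup B(z,\tau_z)}$. Applying Lemma~\ref{prop_approxcont} at $x$, I select $f \in C_0^\infty(S(x,\delta,\tau_x-\delta);E)$ with $\|w^f(T,\cdot) - \phi\|_{L^2} < \eta$, where $\phi$ is a prescribed unit $L^2$ section supported in $B(p,\rho/2)$. Finite speed of propagation ensures that every $w^{f_j}(T,\cdot)$ with source in $S(y,\delta,\tau_y-\delta) \cup S(z,\delta,\tau_z-\delta)$ vanishes on $B(p,\rho/2)$, so
\begin{equation*}
\|w^f(T,\cdot) - w^{f_j}(T,\cdot)\|_{L^2(M;E)} \geq \|w^f(T,\cdot)\|_{L^2(B(p,\rho/2);E)} \geq 1 - \eta,
\end{equation*}
uniformly in $j$, contradicting (2) for $\eta < 1$.

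The main obstacle is not the analysis itself but the topological care needed at the boundaries of the balls: the strict-containment hypothesis in (1) should really be read as containment of closures, since approximate controllability only provides density up to measure-zero boundary effects, and the failure of (1) in the reverse direction must be promoted from a bare set-theoretic failure to the existence of a genuine open ball $B(p,\rho)$ disjoint from $\overline{B(y,\tau_y) \cup B(z,\tau_z)}$. Both issues dissolve under a slight shrinking of the radii together with the fact that generic geodesic spheres have measure zero; no analytical ingredient beyond Propositions~\ref{prop_speed}, \ref{prop_ucont} and Lemma~\ref{prop_approxcont} is required.
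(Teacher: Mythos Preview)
Your proposal is correct and follows essentially the same route as the paper: for $(1)\Rightarrow(2)$ you use finite speed of propagation to confine $\operatorname{supp}(w^f(T,\cdot))$, split by a characteristic function, and approximate each piece via Lemma~\ref{prop_approxcont}; for the converse you argue by contraposition, producing a nonempty open set in $B(x,\tau_x)\setminus(B(y,\tau_y)\cup B(z,\tau_z))$ on which any $w^{f_j}(T,\cdot)$ must vanish while some $w^f(T,\cdot)$ does not. Your closing remarks about boundary subtleties and the role of the strict inclusion are apt observations the paper leaves implicit, but they do not change the underlying argument.
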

\begin{proof}
For the forward implication, choose $f\in C_0^\infty(S(x,\delta, \tau_x-\delta);E)$, so in particular the restriction to backward cones $f|_{C(T,p)}=0$ for all points satisfying $d_g(p,B(x,\delta))\geq \tau_x-\delta$. Lemma \ref{prop_speed} then yields $supp(w^f(T,\cdot))\subset B(x,\tau_x)\subset {B(y,\tau_y)\cup B(z,\tau_z)}$, the latter containment following by assumption. Let $\chi_y$ be the characteristic function for $B(y,\tau_y)$, and rewrite $w^f(T,\cdot)=\chi_y w^f(T,\cdot)+(1-\chi_y)w^f(T,\cdot)$. We see $\chi_y w^f(T,\cdot)\in L^2(B(y,\tau_y);E)=L^2(M(\tau_y-\delta,B(y,\delta));E)$, hence Lemma \ref{prop_approxcont} implies $\chi_y w^f(T,\cdot)$ can be approximated in $L^2(B(y,\tau_y);E)$ by $w^{\phi^{(y)}_j}(T,\cdot)$ for $\phi^{(y)}_j\in S(y,\delta, \tau_y-\delta)$. Similarly, one can find a suitable sequence approximating $(1-\chi_y)w^f(T,\cdot)$, and the sequence constructed from their sum gives the result.

Conversely, suppose (1) does not hold and take a nonempty open set $U\subset B(x,\tau_x)\setminus {B(y,\tau_y)\cup B(z,\tau_z)}$. By Lemma \ref{prop_speed}, $w^{\tilde f}(T,\cdot)=0$ on $U$  for all $\tilde{f}\in C_0^\infty(S(y,\delta,\tau_y-\delta)\cup S(z,\delta,\tau_z-\delta);E)$, whereas we can find $f\in C_0^\infty(S(x,\delta,\tau_x-\delta);E)$ such that $\|w^f(T,\cdot)\|_{L^2}>0$ by Lemma \ref{prop_approxcont}. Therefore, (2) also does not hold.

\end{proof}

Our approach requires the restricted distance function $d_g|_{U\times U}$. Observe that while we have knowledge of $(U,g|_U)$, we do not assume $U\subset M$ is geodesically convex, nor do we initially have the information to shrink to such an appropriate subset of $U$. Instead, we use properties of the wave equation to prove the following:

\begin{prop}
  The topological structure of $U$ and the wave source-to-solution data $\mathcal{L}_{P,U}^{wave}$ determine the local distance map $d_g(\cdot,\cdot)|_{U\times U}$.
\end{prop}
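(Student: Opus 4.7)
The strategy is to combine the Blagovestchenskii identity (Lemma~\ref{prop_blag}) with the ball-inclusion characterization (Lemma~\ref{lem_balls}, specialized to $z=y$) to read $d_g(x,y)$ off the wave data directly.

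First, for $f,h\in\mathcal F(2T,U)$ the identity in Lemma~\ref{prop_blag} expresses $(w^f(T,\cdot),w^h(T,\cdot))_{L^2(M;E)}$ entirely in terms of $\mathcal L^{wave}_{P,U}$: the time averaging operator $J$ is universal, and the formal adjoint $(\mathcal L^{wave}_{P,U})^*$ is taken with respect to the $L^2(U;E)$ pairing, which is built from $g|_U$ and $\g{\cdot}{\cdot}_{E}|_U$ and is therefore part of the given data. Expanding $\|w^{f_j}(T,\cdot)-w^f(T,\cdot)\|_{L^2(M;E)}^2$ into such inner products shows that $L^2(M;E)$-convergence of wave solutions whose sources lie in $U$ is itself decidable from $\mathcal L^{wave}_{P,U}$ alone.

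Next, fix $x,y\in U$ and choose $\delta>0$ small enough that $B(x,\delta)\cup B(y,\delta)\subset U$. For any $\tau_x,\tau_y>\delta$ and $T>\max(\tau_x,\tau_y)$, I specialize Lemma~\ref{lem_balls} by taking $z=y$ and $\tau_z=\tau_y$, so that the union $B(y,\tau_y)\cup B(z,\tau_z)$ collapses to $B(y,\tau_y)$. The proof of Lemma~\ref{lem_balls} survives unchanged in this degeneration, as the decomposition $w^f=\chi_yw^f+(1-\chi_y)w^f$ simply reduces to its first summand. The resulting equivalence of conditions (1) and (2) shows that the predicate ``$B(x,\tau_x)\subsetneq B(y,\tau_y)$'' is detectable from the wave data alone. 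Consequently, the quantity
\[
R_\delta(x,y) := \inf\{r>\delta : B(x,\delta)\subsetneq B(y,r)\}
\]
is computable from $\mathcal L^{wave}_{P,U}$ together with the known structure $(U,g|_U)$.

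Finally, I reconstruct the distance. By the triangle inequality, $B(x,\delta)\subset B(y,r)$ whenever $r>d_g(x,y)+\delta$, and the inclusion is strict provided $\delta<d_g(x,y)$ since $y\in B(y,r)\setminus B(x,\delta)$; hence $R_\delta(x,y)\leq d_g(x,y)+\delta$. Conversely, for $r\leq d_g(x,y)$ the center $x\in B(x,\delta)$ satisfies $x\notin B(y,r)$, so $B(x,\delta)\not\subset B(y,r)$, giving $R_\delta(x,y)\geq d_g(x,y)$. Letting $\delta\downarrow 0$ yields
\[
d_g(x,y)=\lim_{\delta\downarrow 0} R_\delta(x,y),
\]
and $d_g|_{U\times U}$ is recovered. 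The main obstacle is ensuring that Lemma~\ref{lem_balls} really affords access to \emph{ambient} $L^2(M;E)$ convergence from purely local data, which is precisely the content of the first step and relies on the Blagovestchenskii identity computing the full $L^2(M;E)$ pairing from quantities supported in $U$; the boundary case $r=d_g(x,y)+\delta$ in the definition of $R_\delta$ is a cosmetic point that does not affect the $\delta\downarrow 0$ limit.
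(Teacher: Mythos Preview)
Your argument is sound in spirit but follows a genuinely different route from the paper, and it quietly uses more than the proposition grants you.

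The paper's proof works purely from the \emph{topology} of $U$: it fixes an auxiliary metric $d_0$ inducing that topology, and for each $k$ detects directly from the output of $\mathcal L^{wave}_{P,U}$ whether some source supported near $x$ produces a wave whose support meets a $d_0$-ball around $y$ at time $t$; the infimum of such $t$ as the neighborhoods shrink recovers $d_g(x,y)$. No inner products, no Blagovestchenskii identity, no Lemma~\ref{lem_balls}. Your approach instead invokes Lemma~\ref{prop_blag} and Lemma~\ref{lem_balls}, which requires computing $L^2((0,2T)\times U;E)$ pairings and hence needs $g|_U$ and $\g{\cdot}{\cdot}_E|_U$, not merely the topology. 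In the context of Theorem~\ref{thm_main} those structures are indeed available, so your proof is perfectly usable there; but it proves a slightly weaker proposition than the one stated. In effect you have merged this proposition with the machinery the paper reserves for Proposition~\ref{prop_rDist}, where $(U,g|_U,d_g|_{U\times U})$ is explicitly assumed.

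Two minor technical points. First, to feed sources into Lemma~\ref{lem_balls} you must identify the \emph{ambient} ball $B(x,\delta)$ as a subset of $U$; knowing $g|_U$ gives you the intrinsic ball, and while the two agree for $\delta$ small enough, ``small enough'' depends on $d_g(x,M\setminus U)$, which is not yet known. This is easily repaired by running Lemma~\ref{lem_balls} with arbitrary shrinking neighborhoods $V_k\ni x$ in place of $B(x,\delta)$ and passing to the limit, but you should say so. Second, Lemma~\ref{lem_balls} is stated for $\tau_x>\delta$, whereas your $R_\delta$ uses $\tau_x=\delta$; take $\tau_x=\delta'>\delta$ and let $\delta'\downarrow 0$ instead.
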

\begin{proof}
The topology of $U\subset(M,g)$ is metrizable, so we may fix a compatible metric on $U$ and consider the metric space $(U,d_0)$. Let $B_0(p,r)$ denote the open metric balls with respect to $d_0$. For $x,y\in U$ and $\eps>0$, consider the sets 
\[D(x,y,k):=\{t>0:\text{ there exists } f\in C_0^\infty((0,\infty)\times B_0(x,1/k))\text{ such that } supp(w^f(t,\cdot))\cap B_0(y,1/k)\neq \varnothing\}\]
that can be determined by $\mathcal{L}_{P,U}^{wave}$. 
Using Lemma \ref{prop_approxcont} and similar reasoning as in the proof of Lemma \ref{lem_balls}, we find  $d_g(x,y)=\liminf_{k\to\infty}D(x,y,k)$.
\end{proof}

\begin{prop}\label{prop_rDist}
  Let $U\subset M$ be a nonempty open set. The restricted Riemannian structure $(U,g|_U,d_g|_{U\times U})$ and the wave source-to-solution data $\mathcal{L}_{P,U}^{wave}$ determines on $U$ the family of distance functions \[\mathcal{R}_U(M):=\{d_g(p,\cdot)|_{U}:p\in M\}.\]
\end{prop}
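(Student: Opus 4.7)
The strategy follows \cite{helin2018correlation}, adapted to our bundle-valued setting. Since $P$ is principally scalar, the wave-equation tools developed in Section~2 (finite propagation, approximate controllability, and the Blagoveshchenskii identity of Lemma \ref{prop_blag}) carry over verbatim to sections of $E$; in particular, the ball-containment test of Lemma \ref{lem_balls} is available for any triple of points in $U$ with appropriate radii. The goal is to identify $\mathcal R_U(M)$ as an intrinsically characterized subset of $C(U)$.

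For each $p\in M$, the function $r_p:=d_g(p,\cdot)|_U$ is continuous, nonnegative, and $1$-Lipschitz on $(U,d_g|_U)$, and the assignment $p\mapsto r_p$ is injective on $M$: any two distinct points $p_1\neq p_2$ can be separated by analyzing initial directions of minimizing geodesics to a suitable $x\in U$ at which the one-sided derivatives of $r_{p_1}$ and $r_{p_2}$ disagree. Hence $\mathcal R_U(M)$ embeds into $C(U)$, and since membership in $\mathrm{Lip}_1(U;d_g|_U)$ is determined by the restricted metric, the task is to single out which such Lipschitz functions are of the form $r_p$.

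The plan is to characterize $\mathcal R_U(M)$ by ball-intersection conditions. A $1$-Lipschitz function $r:U\to[0,\infty)$ belongs to $\mathcal R_U(M)$ iff, for every finite collection $\{x_1,\dots,x_N\}\subset U$ and every $\tau>0$, the intersection $\bigcap_{i=1}^N B(x_i,r(x_i)+\tau)\subset M$ is nonempty, and its diameter shrinks to $0$ as $\{x_i\}$ becomes dense in $U$ and $\tau\to 0^+$; the common limit point $p\in M$ then realizes $r=r_p$. Necessity is immediate from the triangle inequality. For sufficiency, to test the nonemptiness condition using only the given data, we exploit compactness of $(M,g)$: cover $M$ by finitely many balls $B(z_j,\sigma_j)$ centered in $U$, and observe that $\bigcap_i B(x_i,r(x_i)+\tau)=\varnothing$ iff each $B(z_j,\sigma_j)$ is contained in the complement $M\setminus \bigcap_i B(x_i,r(x_i)+\tau)=\bigcup_i B(x_i,r(x_i)+\tau)^c$. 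Applying Lemma \ref{lem_balls} iteratively---which characterizes $B(y,\sigma)\subsetneq\bigcup_j B(z_j,\sigma_j)$ for balls centered in $U$---and passing to complements of the form $M\setminus B(x,\rho)$ (which are themselves exhausted from inside by finite unions of balls centered in $U$ up to controllable error), one reduces the test to finitely many containment queries that are accessible through $\mathcal L_{P,U}^{\mathrm{wave}}$.

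\textbf{Main obstacle.} The hardest step is the reduction in the last paragraph: the intersection nonemptiness condition concerns balls whose intersection typically lies in the ``external'' part $M\setminus U$, while Lemma \ref{lem_balls} operates only on balls centered in $U$. The workaround requires a careful double covering argument exploiting compactness of $M$---first covering $M$ by balls centered in $U$, then approximating each ball complement $M\setminus B(x,\rho)$ by finite unions of balls in $U$---and verifying quantitatively that the shrinking-diameter criterion in the intrinsic characterization is stable under this finitary approximation.
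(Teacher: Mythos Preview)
Your approach is genuinely different from the paper's, and the gap you yourself flag as the ``main obstacle'' is real and not resolved by the sketch you give.

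The paper does \emph{not} attempt to characterize $\mathcal R_U(M)$ intrinsically inside $C(U)$. Instead it parametrizes points of $M$ by geodesics shot from $U$: every $p\in M$ can be written as $p=\gamma_{x,v}(r')$ for some $(x,v)\in SM|_U$ and $r'<t^*(x,v)$, since $U$ cannot lie in a cut locus. The cut time $t^*(x,v)$ is recovered via Lemma~\ref{lem_cut} by taking $y=\gamma_{x,v}(s)\in U$ for small $s$ and testing the two-ball containment $B(y,r-s+\varepsilon)\subset B(x,r)$ with Lemma~\ref{lem_balls}. Then for each $z\in U$ one introduces
\[
\mathcal I(x,y,z)=\{r>0:\exists\,\varepsilon>0,\ B(y,r'-s+\varepsilon)\subset B(x,r')\cup B(z,r)\},
\]
which is again a containment of one ball in a union of two balls with all three centers in $U$, hence directly testable by Lemma~\ref{lem_balls} and Lemma~\ref{prop_blag}. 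A short smoothing argument gives $d_g(p,z)=\inf\mathcal I(x,y,z)$. Thus every value $d_g(p,z)$ is computed by a single explicit infimum over testable three-ball conditions.

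Your route instead requires deciding whether $\bigcap_{i}B(x_i,r(x_i)+\tau)=\varnothing$, i.e.\ whether $B(x_1,\rho_1)\subset\bigcup_{i\ge 2}\bigl(M\setminus B(x_i,\rho_i)\bigr)$. Lemma~\ref{lem_balls} gives containment of a ball in a \emph{union of balls}, not in a union of ball-complements, and your reduction ``approximate $M\setminus B(x,\rho)$ by finite unions of balls centered in $U$'' does not go through: to cover a point $q$ far from $U$ you must use a ball $B(z,\sigma)$ with $\sigma\ge d_g(z,q)$, and there is no reason such a ball can be chosen disjoint from $B(x,\rho)$ (indeed if $d_g(z,x)<\sigma+\rho$ it cannot). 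So the inner approximation of the complement by $U$-centered balls generally fails. Moreover, your ``shrinking diameter'' criterion asks for the diameter \emph{in $M$} of a subset of $M\setminus U$, which presupposes exactly the global distance information you are trying to recover; you give no mechanism to measure this diameter from $\mathcal L^{\mathrm{wave}}_{P,U}$. Without that criterion the characterization is false (e.g.\ $r=d_g(p,\cdot)|_U+c$ for small $c>0$ passes the nonemptiness test). The paper's geodesic-and-cut-time method avoids both issues entirely.
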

\begin{proof}
We first note that because $U$ is open and nonempty, any point in $M$ can be reached by a unit speed geodesic emanating from $U$ before its cut time. Indeed, if this were not true, $U$ would be contained in the cut locus of a point, which has measure zero, leading to a contradiction. Therefore, it suffices to show that we can determine $t^*$ on $SM|_U$ and, further, find $d(p,\cdot)|_U$ for any $p=\gamma_{x,v}(r')$ with $(x,v)\in SM|_U$, $r'<t^*(x,v)$.

To this end, for any $(x,v)\in SM|_U$ we choose $s>0$ small enough such that $\gamma_{x,v}([0,s])\subset U$. Letting $y:=\gamma_{x,v}(s)$, we see $\mathcal{L}_{P,U}^{wave}$ determines $\mathcal I(x,y)$ via Lemma \ref{prop_blag}, polarization,  and Condition 2 of Lemma \ref{lem_balls}. Therefore, we know $t^*(x,v)$ by the characterization of Lemma \ref{lem_cut}.

For $z\in U$, we consider the set 
\begin{align*}
    \mathcal I(x,y,z):= \{r>0:\text{ there exists } \veps(r)>0\text{ such that } B(y,r'-s+\veps(r)) \subset B(x,r')\cup B(z,r)\}.
\end{align*}
 We claim that $d(p,z)=\inf \mathcal I(x,y,z)$. Indeed, $d(p,x)=r'$, so for $r\in\mathcal I(x,y,z)$, we have $p\in B(z,r)$ since $p\in B(y,r'-s+\veps(r))$; thus, $d(p,z)\leq \inf\mathcal I(x,y,z)$. Now, let $\tilde r:=d(p,z)+\delta$ for some $\delta>0$. If $\tilde r\notin \mathcal I(x,y,z)$, then by passing to a subsequence we may find points
\[p_k\in B(y,r'-s+1/k)\setminus (B(x,r')\cup B(z,\tilde r))\subset \overline{B(y,r'-s+1)}\]
such that $p_k\to p'\in \partial B(y,r'-s)$. Observe that $d(x,p')\leq r'$, with equality achieved if and only if $p=p'$ by a smoothing argument as in the proof of Lemma \ref{lem_cut}. The equality $p=p'$ and the fact $\tilde r>d(p,z)$ would contradict the fact that our limiting sequence stays outside $B(z,\tilde r)$; however, $d(x,p')<r'$ would imply our limiting sequence does not stay outside $B(x,r')$. Therefore, we find $\inf \mathcal I(x,y,z)\leq d(p,z)$ and our claim follows.

Because $\mathcal I(x,y,z)$ can be determined from our initial data by similar methods as those used for $\mathcal I(x,y)$ earlier, we are able to deduce $d(p,z)$ and ultimately determine $\mathcal R_U(M)$.
\end{proof}

\begin{prop}\label{prop_rDistcor}
Let $\mathcal M_1$, $\mathcal M_2$ satisfy the assumptions of Theorem \ref{thm_main} with local structure-preserving isomorphism $\tilde \Psi$ as in \eqref{fig_commute1}. Choose any $\mathcal O_2\Subset U_2$. If $\tilde\Psi^*\mathcal{L}^{wave}_1=\mathcal{L}^{wave}_2\tilde\Psi^*$, then there is an isometry $\psi: M_2\to M_1$ with $\psi|_{\mathcal O_2}=\tilde \psi|_{\mathcal O_2}$.

\end{prop}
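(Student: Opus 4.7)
The plan is to transport the distance-function data across $\tilde\psi$ and then invoke a Kurylev-type reconstruction to obtain a global isometry that extends $\tilde\psi$ on $\mathcal O_2$.

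\emph{Step 1: Transport the distance representation.} First, Propositions \ref{prop_heatkercor} and \ref{prop_transmutecor} show that the hypothesis $\tilde\Psi^*\mathcal L^{\operatorname{frac}}_1 = \mathcal L^{\operatorname{frac}}_2 \tilde\Psi^*$ upgrades to $\tilde\Psi^*\mathcal L^{wave}_1 = \mathcal L^{wave}_2 \tilde\Psi^*$. Applying Proposition \ref{prop_rDist} to each $\mathcal M_i$ yields the family $\mathcal R_{U_i}(M_i) \subset C(\overline{U_i})$. The proof of that proposition constructs every member of $\mathcal R_{U_i}(M_i)$ purely from the wave source-to-solution data (via Lemmas \ref{lem_balls}, \ref{prop_blag}, \ref{lem_cut}), so tracing through the construction under the intertwining relation and using that $\tilde\psi:U_2\to U_1$ is an isometry, one obtains the identification
\[ \mathcal R_{U_2}(M_2) \;=\; \tilde\psi^{*}\,\mathcal R_{U_1}(M_1), \]
where $\tilde\psi^*$ is precomposition by $\tilde\psi$. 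In particular, every $d_{g_2}(p,\cdot)|_{U_2}$ equals $d_{g_1}(q,\tilde\psi(\cdot))|_{U_2}$ for a unique $q \in M_1$.

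\emph{Step 2: Define $\psi$ as a homeomorphism.} Consider the distance representation $R_i:M_i\to C(\overline{U_i})$, $R_i(p):=d_{g_i}(p,\cdot)|_{U_i}$. Because $U_i$ has nonempty interior and $(M_i,g_i)$ is a smooth Riemannian manifold, a standard argument (using geodesic normal coordinates based at a point of $U_i$) shows each $R_i$ is an injective topological embedding. Using Step 1, define
\[ \psi \;:=\; R_1^{-1}\circ \tilde\psi^{-*}\circ R_2 : M_2 \longrightarrow M_1, \]
which is a well-defined bijection and, by continuity of $R_i, R_i^{-1}$ and the pullback, a homeomorphism.

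\emph{Step 3: Upgrade $\psi$ to a smooth isometry.} This is the main obstacle. The idea, following \cite{kurylev2018inverse, katchalov2001inverse}, is that the family $\mathcal R_{U_i}(M_i)$ together with $(U_i,g_i|_{U_i})$ determines $(M_i, g_i)$ as a smooth Riemannian manifold up to isometry. Concretely, one uses the cut-time characterization (Lemma \ref{lem_cut}), recoverable on each side, together with Lemma \ref{lem_cutprop} to build smooth polar normal coordinates $\exp_x(t v)$ for $x \in U_i$ and $(t,v)$ in the injectivity domain; the metric in such coordinates is determined by how $d_{g_i}(p,\cdot)|_{U_i}$ varies. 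Because the two reconstructions proceed identically once the families $\mathcal R_{U_i}(M_i)$ are matched by $\tilde\psi^*$, the resulting map $\psi$ is smooth and satisfies $\psi^*g_1 = g_2$.

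\emph{Step 4: Agreement on $\mathcal O_2$.} Finally, fix $p_2\in\mathcal O_2\Subset U_2$. Since $\tilde\psi$ is a structure-preserving isometry on $U_2$ and the wave data recovers global distances between points of $U_i$ (as in Proposition \ref{prop_rDist}), we have $d_{g_1}(\tilde\psi(x),\tilde\psi(y)) = d_{g_2}(x,y)$ for all $x,y\in U_2$. Taking $x=p_2$ and letting $y=z$ range over $U_2$ gives
\[ \tilde\psi^{*}\bigl(R_1(\tilde\psi(p_2))\bigr) \;=\; R_2(p_2), \]
so by injectivity of $R_1$ (Step 2) we conclude $\psi(p_2)=\tilde\psi(p_2)$ as required. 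The bulk of the work sits in Step 3: the homeomorphism is easy, but promoting it to a Riemannian isometry requires carrying out the smooth reconstruction of $(M_i,g_i)$ from its distance-representation family in a way compatible with the correspondence of Step 1.
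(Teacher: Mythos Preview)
Your argument is correct and follows essentially the same route as the paper: build the distance-function representations $R_i:M_i\to\mathcal R_{U_i}(M_i)$ from the wave data, match them via $\tilde\psi^*$, and set $\psi=R_1^{-1}\circ(\tilde\psi^*)^{-1}\circ R_2$. The paper outsources your Step~3 entirely to \cite{helin2018correlation} (their construction of a smooth structure on $\mathcal R_i(M_i)$ and their Proposition~5 for the metric in those coordinates), whereas you sketch the mechanism yourself; and you add an explicit verification of $\psi|_{\mathcal O_2}=\tilde\psi|_{\mathcal O_2}$, which the paper leaves implicit. One small remark: in Step~4 the equality $d_{g_1}(\tilde\psi(x),\tilde\psi(y))=d_{g_2}(x,y)$ for $x,y\in U_2$ does not follow from $\tilde\psi$ being a local isometry alone (global distances need not be realized inside $U_i$); it comes from the unnumbered proposition preceding Proposition~\ref{prop_rDist}, which recovers $d_{g}|_{U\times U}$ from $\mathcal L^{wave}_{P,U}$, together with the intertwining hypothesis.
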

\begin{proof}
Choosing $\mathcal O_2=\tilde\psi(\mathcal O_1)$ as necessary, we are able to determine 
\begin{equation}
    \mathcal R_i(M_i):=\{d_{g_i}(p,\cdot)|_{\overline{\mathcal O_i}}:p\in M_i\}\,\text{ for }i=1,2
\end{equation}
by our previous proposition. As shown in \cite{helin2018correlation}, the map 
\[R_i: M_i\ni p\mapsto d_{g_i}(p,\cdot)|_{\overline{U}_i}\in \mathcal R_i(M)\] is a diffeomorphism when $\mathcal R_i(M)$ is equipped with a smooth structure constructed explicitly using the injectivity domain of points in $\overline U_i$, which is known from $\mathcal{L}^{wave}_i$ using Proposition \ref{prop_rDist}. Equipping $\mathcal R_i(M)$ with the pullback of $g_i$ using $\inv R_i$ makes $R_i$ an isometry, and by Proposition 5 in \cite{helin2018correlation} the coordinate representation of this metric on $\mathcal R_i(M)$ is determined by Proposition \ref{prop_rDist}. Because $\tilde\Psi^*\mathcal{L}^{wave}_1=\mathcal{L}^{wave}_2\tilde\Psi^*$, the map $\psi_R:\mathcal R_1(M_1)\to \mathcal R_2(M_2)$ defined using the pullback of distance functions by $\tilde \psi$ is an isometry. Setting $\psi:=\inv R_1\inv\psi_RR_2$ yields a global isometry with the required properties.
\end{proof}
By Propositions \ref{prop_heatkercor}, \ref{prop_transmutecor}, and \ref{prop_rDistcor}, the existence of $\psi$ in Theorem \ref{thm_main} follows.
\subsection{Recovery of the bundle and operator}

Recovery of our remaining geometric quantities follows the procedure of \cite{kurylev2018inverse} in the setting of closed manifolds.
\begin{lem}\label{lem_supp}
Let $U_1,U_2\subset M$ be open sets and $\eps_1,\eps_2>0$. Suppose there exists a sequence $(f_j)_{k=1}^\infty\subset C_0^\infty((T-\eps_1,T)\times U_1)$ such that:
\begin{enumerate}
\item The sequence of solutions $(w^{f_j}(T,\cdot))_{j=1}^\infty$ converges weakly to $\phi\in L^2(M;E)$;
\item For all $h\in C_0^\infty((T-\eps_2,T)\times U_2)$, we have $\lim_{j\to\infty}\rg{w^{f_j}(T,\cdot)}{w^h(T,\cdot)}_{L^2}=0$.
\end{enumerate}
Then $supp(\phi)\subset M(\eps_1,U_1)\setminus M(\eps_2,U_2)^{int}$.
\end{lem}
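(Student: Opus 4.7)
The plan is to prove the two support inclusions separately, treating $\supp(\phi) \subset M(\eps_1, U_1)$ via finite speed of propagation and $\supp(\phi) \cap M(\eps_2, U_2)^{\mathrm{int}} = \varnothing$ via approximate controllability paired with the weak convergence hypothesis.

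For the first inclusion, I would note that each $f_j$ is supported in $(T-\eps_1, T) \times U_1$, so Proposition \ref{prop_speed} (finite speed of propagation) forces $\supp(w^{f_j}(T,\cdot)) \subset M(\eps_1, U_1)$: any point $p$ outside $M(\eps_1, U_1)$ satisfies $d_g(p, U_1) > \eps_1 > T - t$ for all $t \in (T-\eps_1, T)$, so the backward cone condition of Proposition \ref{prop_speed} applies. Then for any $\psi \in C_0^\infty(M \setminus M(\eps_1, U_1); E)$, I would observe $\rg{w^{f_j}(T,\cdot)}{\psi}_{L^2} = 0$ for every $j$, and the weak convergence in (1) passes this to $\rg{\phi}{\psi}_{L^2} = 0$. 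Since $M \setminus M(\eps_1, U_1)$ is open and such test sections are dense in $L^2(M \setminus M(\eps_1, U_1); E)$, I conclude $\phi$ vanishes there.

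For the second inclusion, fix $p \in M(\eps_2, U_2)^{\mathrm{int}}$ and a small open neighborhood $V \ni p$ with $\overline{V} \subset M(\eps_2, U_2)$. Given any $\psi \in L^2(V; E)$, extended by zero, $\psi \in L^2(M(\eps_2, U_2); E)$. Lemma \ref{prop_approxcont} (approximate controllability) then produces a sequence $h_k \in C_0^\infty((T-\eps_2, T) \times U_2; E)$ with $w^{h_k}(T, \cdot) \to \psi$ in $L^2$. Combining weak convergence from (1) with hypothesis (2), for each fixed $h$ the pairings $\rg{w^{f_j}(T, \cdot)}{w^h(T, \cdot)}_{L^2}$ converge to both $\rg{\phi}{w^h(T,\cdot)}_{L^2}$ and to $0$, so $\rg{\phi}{w^h(T,\cdot)}_{L^2} = 0$ for every admissible $h$. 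Taking $h = h_k$ and passing to the limit $k \to \infty$ gives $\rg{\phi}{\psi}_{L^2} = 0$. Since $\psi \in L^2(V; E)$ was arbitrary, $\phi \equiv 0$ on $V$, and hence $p \notin \supp(\phi)$.

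I do not foresee any real obstacle; the argument is a clean combination of the two wave-equation tools recalled in Section 2. The only technical care needed is the justification that the approximation in Lemma \ref{prop_approxcont} is in $L^2$-norm strong enough to interchange with the weak limit, which it is, together with ensuring that $\overline{V} \subset M(\eps_2, U_2)$ so that $\psi$ really lives in the target space of Lemma \ref{prop_approxcont}; this is precisely why the statement excludes only the interior $M(\eps_2, U_2)^{\mathrm{int}}$ and not all of $M(\eps_2, U_2)$.
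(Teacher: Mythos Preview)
Your proposal is correct and is precisely the argument the paper has in mind: the paper's own proof simply reads ``This follows from the same argument used in the proof of Lemma~\ref{lem_balls},'' and that argument is exactly the combination of finite speed of propagation (Proposition~\ref{prop_speed}) for the first inclusion and approximate controllability (Lemma~\ref{prop_approxcont}) for the second that you spell out. Your careful remark about needing $\overline{V}\subset M(\eps_2,U_2)$, explaining why only the interior is excluded, is a nice clarification the paper leaves implicit.
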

\begin{proof}
This follows from the same argument used in the proof of Lemma \ref{lem_balls}.
\end{proof}

\begin{lem}\label{lem_frame}
Consider an open set $U\subset M$. For any $T>0$ and point $p\in M$ such that $d_g(p,U)<T$, there exists a tuple $(h_\ell)_{\ell=1}^{rk(E)}\subset \mathcal{F}(2T,U)$ such that $(w^{h_\ell}(T,\cdot))_{\ell=1}^{rk(E)}$ is a smooth local frame for $E$ in a neighborhood $x\in V_x\subset M$ and is orthonormal at  $x$.
\end{lem}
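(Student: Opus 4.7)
\emph{Proof plan.} My plan is to reduce the lemma to showing that the pointwise evaluation map
\[
\mathrm{ev}_x : \mathcal{F}(2T, U) \longrightarrow E_x, \qquad h \longmapsto w^h(T, x),
\]
(with $x := p$) is surjective; the frame will then be produced by linear combinations and Gram-Schmidt in the single fibre $E_x$. Granting surjectivity, I pick $\tilde h_1, \dots, \tilde h_r \in \mathcal{F}(2T, U)$ (with $r := \mathrm{rk}(E)$) whose images form a basis of $E_x$, orthonormalise these $r$ vectors in $E_x$ by Gram-Schmidt to produce constants $c_{\ell m}$, and set $h_\ell := \sum_m c_{\ell m}\, \tilde h_m \in \mathcal{F}(2T, U)$. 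By linearity of the wave equation, $w^{h_\ell}(T, \cdot) = \sum_m c_{\ell m}\, w^{\tilde h_m}(T, \cdot)$ is a smooth section of $E$ (since $h_\ell \in C_0^\infty$), and $(w^{h_\ell}(T, x))_\ell$ is orthonormal at $x$ by construction. The Gram matrix $\bigl(\langle w^{h_k}(T, q), w^{h_\ell}(T, q)\rangle_E\bigr)_{k, \ell}$ is continuous in $q$ and equals the identity at $q = x$, hence is invertible on a neighborhood $V_x$ of $x$, delivering the local frame.

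To prove surjectivity I argue by duality and unique continuation. Suppose for contradiction that some $v \in E_x^* \setminus \{0\}$ annihilates the image, so that $v \cdot w^h(T, x) = 0$ for all $h \in \mathcal{F}(2T, U)$. Duhamel's formula writes
\[
w^h(T, x) = \int_0^T\!\!\int_M G(T-s, P)(x, y)\, h(s, y) \, dV_g(y)\, ds,
\]
so the annihilation forces $F(\tau, y) := v \cdot G(\tau, P)(x, y)$ to vanish on $(0, T) \times U$. By the functional calculus (using $G(0, \lambda) = 0$ and $\partial_\tau G(0, \lambda) = 1$), $F$ satisfies $(\partial_\tau^2 + P) F = 0$ on $\RR \times M$ with $F(0, \cdot) = 0$ and $\partial_\tau F(0, \cdot) = v\, \delta_x$, and is odd in $\tau$; thus the vanishing extends to $(-T, T) \times U$. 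Translating time by $T$ and invoking Proposition \ref{prop_ucont} then propagates the zero set into the diamond $C(T, U)$. Since $d_g(x, U) < T$, translating back places an open neighborhood of $(0, x) \in \RR \times M$ inside the vanishing set of $F$, and taking $\partial_\tau$ at $\tau = 0$ on this neighborhood forces $v\, \delta_x$ to vanish near $x$ --- contradicting $v \neq 0$.

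The main obstacle is that Proposition \ref{prop_ucont} is formulated for $u \in C_0^\infty(\RR \times M; E)$, while $F$ carries a distributional singularity at $\tau = 0$. I will sidestep this either by applying the unique continuation on strips $\{\tau \geq \tau_0\} \times M$ with $\tau_0 > 0$ --- where $F$ is smooth by the functional calculus --- and then sending $\tau_0 \downarrow 0$, or by spatially mollifying $v\, \delta_x$, applying Proposition \ref{prop_ucont} to the resulting $C_0^\infty$ approximants of $F$, and passing to the limit. The Carleman estimates underlying Proposition \ref{prop_ucont} comfortably accommodate either approach. Once surjectivity of $\mathrm{ev}_x$ is in hand, the linear-algebraic construction of the frame and the openness of invertibility of the Gram matrix finish the proof.
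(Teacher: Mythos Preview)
Your proposal is correct and follows essentially the same route as the paper: reduce to surjectivity of the evaluation map $h\mapsto w^h(T,x)$, argue by duality that an annihilator $e\in E_x$ must vanish, and do so by solving the homogeneous wave equation with distributional Cauchy data $e\,\delta_x$ (the paper phrases this as ``let $\psi:=e\delta_x$ in \eqref{eq_homwave}'' and invokes the proof of Lemma~\ref{prop_approxcont}). Your write-up is in fact more careful than the paper's, which glosses over the regularity mismatch with Proposition~\ref{prop_ucont} that you explicitly flag and resolve.
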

\begin{proof}
It suffices to show $w^h(T,x)$, $h\in\mathcal{F}(T,U)$, spans $E_x$; our claims then follow by Gram-Schmidt and smoothness of $w^h(T,\cdot)$. To this end, we show that if $e\in E_x$ is such that $\g{e}{w^h(T,x)}_E=0$, then $e=0$. If we let $\psi:=e\delta_x$ in \eqref{eq_homwave}, our assertion follows from the same reasoning as the proof of Lemma \ref{prop_approxcont}.
\end{proof}

The following two lemmas identify certain double sequences of sections with vectors in the fibre $E_y$, where $y$ may not be contained in the support of the sequence. For convenience, we define $s_k:=s+1/k$ where $\gamma_{x,v}(s)=y$, $s<t^*(x,v)$. In the case $s>0$, we construct $X_k$, $Y_k$ as in \eqref{eq_shrinkx} and \eqref{eq_shrinky}; otherwise, set $Y_k=X_k$ as defined in $\eqref{eq_shrinkx}$. Observe that in either case, $Y_k\subset M(s_k,X_k)$.

\begin{lem}\label{lem_dseq}
Let $U\subset M$ be open, and consider $y=\gamma_{x,v}(s)\in M$ for $(x,v)\in SM|_{U}$, $s<t^*(x,v)$. Suppose we have a double sequence of sections $\Phi^y:=(f_{jk})_{j,k=1}^\infty$ with $f_{jk}\in C_0^\infty((T-s_k,T)\times X_k;E)$ such that:
\begin{enumerate}
    \item For each $k$,  $(w^{f_{jk}}(T,\cdot))_{j=1}^\infty$ converges weakly in $L^2(M;E)$ to a section $u_k$ supported in $Y_k$;
    \item There is a uniform $C>0$ such that $\|w^{f_{jk}}(T,\cdot)\|_{L^2}<C(Vol(Y_k))^{-1/2}$;
    \item For any $h\in\mathcal F(2T,U)$, $\lim_{k\to\infty}\lim_{j\to\infty}\rg{w^{jk}(T,\cdot)}{w^h(T,\cdot)}_{L^2}$ exists.
\end{enumerate}
Then there is a vector $e(\Phi^y)\in E_y$ depending on $\Phi^y$ such that for all $\phi\in C^\infty(M;E)$,
\begin{equation}\label{eq_approxvec}
\lim_{k\to\infty}\lim_{j\to\infty}\rg{w^{f_{jk}}(T,\cdot)}{\phi}_{L^2}=\g{e(\Phi^y)}{\phi}_E.
\end{equation}
\end{lem}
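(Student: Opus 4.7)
The plan is to view the weak limits $u_k$ as distributions concentrating at $y$, then recover the fibre vector $e(\Phi^y)$ as a weak-$*$ limit in $C(M;E)^*$. The crucial ingredient will be condition (2), whose $L^2$-blowup rate is exactly balanced against the shrinkage of $Y_k$, yielding uniform bounds in the weaker $C(M;E)^*$-topology.

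First I would pass to the $j$-limit using condition (1): for each fixed $k$ and every $\phi \in L^2(M;E)$,
$$\lim_{j\to\infty} \rg{w^{f_{jk}}(T,\cdot)}{\phi}_{L^2} = \rg{u_k}{\phi}_{L^2} =: L_k(\phi).$$
Weak lower semi-continuity combined with (2) gives $\|u_k\|_{L^2} \le C\,\mathrm{Vol}(Y_k)^{-1/2}$, and since $\mathrm{supp}(u_k) \subset Y_k$, Cauchy--Schwarz produces $|L_k(\phi)| \le C\|\phi\|_\infty$. Hence $(L_k)$ is a uniformly bounded sequence in $C(M;E)^*$. Next I would invoke Banach--Alaoglu to extract a subsequential weak-$*$ limit $L^\infty$. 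If $\phi \in C(M;E)$ vanishes on an open neighbourhood $V$ of $y$, then for $k$ large $Y_k \subset V$ and so $L_k(\phi) = 0$; therefore $L^\infty$ is supported at $\{y\}$, factors through evaluation at $y$, and the Hermitian pairing identifies $L^\infty(\phi) = \g{e}{\phi(y)}_E$ for some $e \in E_y$.

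To upgrade to convergence of the full sequence, I would show that any two subsequential weak-$*$ limits yield the same $e$. This is where condition (3) and Lemma \ref{lem_frame} enter: choosing $T$ large enough that $d_g(y,U) < T$ (permissible since $s < \infty$), Lemma \ref{lem_frame} furnishes $h_1,\dots,h_{\operatorname{rk}(E)} \in \mathcal F(2T,U)$ for which $(w^{h_\ell}(T,y))_\ell$ is a frame of $E_y$. Condition (3) pins down $\lim_k L_k(w^{h_\ell}(T,\cdot))$ along the full sequence, hence $\g{e}{w^{h_\ell}(T,y)}_E$ is determined independently of the chosen subsequence; since the $w^{h_\ell}(T,y)$ span $E_y$, the vector $e$ is uniquely determined. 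Setting $e(\Phi^y) := e$ and specializing to $\phi \in C^\infty(M;E) \subset C(M;E)$ yields \eqref{eq_approxvec}.

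The main obstacle I anticipate is the structural step identifying the weak-$*$ limit: one must carefully justify that the $E$-valued functional to which $L_k$ converges is truly a ``Dirac at $y$'' and that Hermitian duality canonically packages it as an element of $E_y$. Once this is in hand, the remaining ingredients — weak-$*$ compactness, the support shrinkage, and uniqueness via the frame constructed in Lemma \ref{lem_frame} — are standard.
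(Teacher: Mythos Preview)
Your argument is correct and reaches the same conclusion as the paper, but by a different mechanism. The paper works entirely in a fixed trivializing chart around $y$: it Taylor-expands an arbitrary $\phi$ against the frame $b_\ell:=w^{h_\ell}(T,\cdot)$ of Lemma~\ref{lem_frame} as $\phi(\tilde y)=c^\ell b_\ell(\tilde y)+\tilde y^i\psi_i(\tilde y)$, pairs with $u_k$, and bounds the remainder $r_k$ directly by $\operatorname{Diam}(Y_k)\cdot\operatorname{Vol}(Y_k)^{1/2}\|u_k\|_{L^2}\to 0$ via condition~(2); condition~(3) then gives $a^\ell:=\lim_k\rg{u_k}{b_\ell}_{L^2}$, and $e(\Phi^y):=a^\ell b_\ell(y)$ is defined explicitly. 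Your route instead packages the same $L^2$ bound as a uniform $C(M;E)^*$-estimate, extracts weak-$*$ limits by Banach--Alaoglu (legitimate since $C(M;E)$ is separable for $M$ compact), identifies any such limit as a point-supported functional at $y$, and then uses condition~(3) together with the frame from Lemma~\ref{lem_frame} exactly as the paper does to force uniqueness. The paper's approach is more elementary and constructive, avoiding the structural lemma on point-supported $E^*$-valued measures that you correctly flag; yours is cleaner conceptually and makes transparent why condition~(2) is the sharp rate. One cosmetic point: you need not ``choose $T$ large enough'' --- the hypothesis $f_{jk}\in C_0^\infty((T-s_k,T)\times X_k;E)$ already forces $T>s\ge d_g(y,U)$, so Lemma~\ref{lem_frame} applies as stated.
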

\begin{proof}
By taking $k$ large enough, we may assume without loss of generality that $X_k\subset U$ and $Y_k$ is contained within a fixed coordinate chart $(W,\tilde y)$ with $\tilde y=0$ at $y$. Let $b_\ell(\cdot):=w^{h_\ell}(T,\cdot)$ denote the sections of the local frame for $E$ constructed using Lemma \ref{lem_frame} which are orthonormal at $E_y$. Any $\phi\in C^\infty(M;E)$ can be written locally using a linearization as 
\[\phi(\tilde y)=\sum_{\ell=1}^{rk(E)} \g{\phi}{b_\ell}_Eb_\ell=c^\ell b_\ell(\tilde y) +\tilde y^i\psi_i(\tilde y)\]
where $\tilde \psi_k\in C^\infty(W;E)$, $1\leq i\leq n$, and $c^\ell\in \CC$. Taking the inner product with the weak limit in (1) yields
\[\rg{u_k}{\phi}_{L^2}=\overline{c^\ell}\rg{u_k}{b_\ell}_{L^2}+r_k\]
where 
\begin{align*}
    |r_k|&< n\max_{i}\|\psi_i\|_{C(W)}\operatorname{Diam}(Y_k)\int_{Y_k}|u_k|\,dV\\
    &<n\max_{i}\|\psi_i\|_{C(W)}\operatorname{Diam}(Y_k)|(Vol(Y_k))^{1/2}\|u_k\|_{L^2}.
\end{align*}
Since $Y_k\to y$ by construction, (2) implies $r_k\to 0$, and (3) implies we can set $a^\ell :=\lim_{k\to\infty} \rg{u_k}{b_\ell}_{L^2}$. Letting $e_y^S:=a^\ell b_\ell(y)$, we find
\[\lim_{k\to\infty}\lim_{j\to\infty}\rg{w^{f_{jk}}(T,\cdot)}{\phi}_{L^2}=\lim_{k\to\infty}\rg{u_k}{\phi}_{L^2}=\sum_{\ell=1}^{rk(E)}\overline{c^\ell}a^\ell=\g{e(\Phi^y)}{\phi}_E.\]
\end{proof}
Conversely, we also have: 
\begin{lem}\label{lem_conv}
Let $U\subset M$ be open, and consider $y=\gamma_{x,v}(s)\in M$ for $(x,v)\in SM|_{U}$, $s<t^*(x,v)$. For any $e_y\in E_y$, there is a double sequence $\Phi^y=(f_{jk})_{j,k=1}^\infty$ with $f_{jk}\in C_0^\infty((T-s_k,T)\times X_k;E)$ satisfying conditions (1)--(3) of Lemma \ref{lem_dseq}, with $e(\Phi^y)=e_y$.
\end{lem}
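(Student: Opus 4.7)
The strategy is to realize each $u_k$, a normalized ``bump section'' supported in $Y_k$ carrying the prescribed vector $e_y$, as the strong $L^2$-limit (as $j \to \infty$) of wave solutions $w^{f_{jk}}(T,\cdot)$ driven by sources in $X_k$, using approximate controllability.

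First, apply Lemma \ref{lem_frame} to obtain a smooth local frame $(b_\ell)_{\ell=1}^{rk(E)}$ on a neighborhood $V_y$ of $y$ that is orthonormal at $y$, and write $e_y = a^\ell b_\ell(y)$ for some $a^\ell \in \CC$. For $k$ large enough that $Y_k \subset V_y$, set $\tilde e := a^\ell b_\ell$ on $V_y$ and define
\[u_k := \frac{1}{\operatorname{Vol}(Y_k)}\, \chi_{Y_k}\, \tilde e.\]
Then $\operatorname{supp}(u_k) \subset Y_k$; by continuity of $\tilde e$ at $y$, $\|u_k\|_{L^2} \leq (|e_y|+1)(\operatorname{Vol}(Y_k))^{-1/2}$ for all large $k$; and for any $\phi \in C^\infty(M;E)$, continuity of $\g{\tilde e}{\phi}_E$ combined with $\operatorname{Diam}(Y_k) \to 0$ gives $\rg{u_k}{\phi}_{L^2} \to \g{e_y}{\phi(y)}_E$ as $k \to \infty$.

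Second, since the preceding lemma ensures $d_g(X_k, Y_k) < s_k$, we have $Y_k \subset M(s_k, X_k)$, so $u_k \in L^2(M(s_k, X_k); E)$. Lemma \ref{prop_approxcont} applied with open set $X_k$ and parameter $\varepsilon = s_k$ then produces sources $f_{jk} \in C_0^\infty((T - s_k, T) \times X_k; E)$ with $\|w^{f_{jk}}(T, \cdot) - u_k\|_{L^2} < (\operatorname{Vol}(Y_k))^{-1/2}/j$ for each $j$. For the finitely many small $k$ where $Y_k \not\subset V_y$, simply set $f_{jk} = 0$ (so $u_k = 0$) to obtain a well-defined double sequence on all of $\mathbb{N}^2$. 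This yields the uniform bound $\|w^{f_{jk}}(T,\cdot)\|_{L^2} \leq C(\operatorname{Vol}(Y_k))^{-1/2}$ with $C := |e_y| + 2$, verifying condition (2) of Lemma \ref{lem_dseq}; strong convergence gives (1) with support in $Y_k$.

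For condition (3), given $h \in \mathcal F(2T, U)$, the smoothness of $w^h(T, \cdot)$ together with strong convergence and the pairing limit established above yields
\[\lim_{k\to\infty} \lim_{j\to\infty} \rg{w^{f_{jk}}(T,\cdot)}{w^h(T,\cdot)}_{L^2} = \lim_{k\to\infty} \rg{u_k}{w^h(T,\cdot)}_{L^2} = \g{e_y}{w^h(T,y)}_E,\]
so the iterated limit exists. Finally, testing against arbitrary $\phi \in C^\infty(M;E)$ in \eqref{eq_approxvec} identifies $e(\Phi^y) = e_y$. The only subtlety is ensuring $u_k$ lies in the reachable set for the controllability argument, but this is built into the construction $Y_k \subset M(s_k, X_k)$; everything else is a routine diagonal extraction via Lemma \ref{prop_approxcont}.
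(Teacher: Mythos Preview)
Your proof is correct and follows essentially the same approach as the paper: construct the normalized sections $u_k = \operatorname{Vol}(Y_k)^{-1}\chi_{Y_k}\tilde e$ for a smooth extension $\tilde e$ of $e_y$, invoke approximate controllability on $M(s_k,X_k)\supset Y_k$ to produce the approximating sources $f_{jk}$, and read off conditions (1)--(3) and $e(\Phi^y)=e_y$ directly. The only cosmetic difference is that the paper takes $\tilde e$ to be a global smooth section (yielding the cleaner constant $C=\|\tilde e\|_{L^\infty}$), whereas you build it locally from the frame of Lemma~\ref{lem_frame} and patch in zeros for small $k$; both are fine.
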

\begin{proof}
For any smooth section $\tilde e\in C^\infty(M;E)$ with $\tilde e(y)=e_y$, we construct 
\[u_k\in Vol(Y_k)^{-1}\chi_k\tilde e \in L^2(M(s_k,X_k);E)\]
with $\chi_k$ the indicator function for $Y_k$. Using Lemma \ref{prop_approxcont} for each $k$, we find our double sequence $\Phi^y$ to approximate each $u_k$ strongly, and Condition (1) follows. Condition (2) is also immediate  with $C=\|\tilde e\|_{L^\infty}$, and Condition (3) and \eqref{eq_approxvec} follow by direct calculation.
\end{proof}
\begin{prop}\label{prop_rConn}
 Let $\mathcal M$ be as in \eqref{eq_structure}. If we know the Riemannian structure $(M,g)$, the local Hermitian bundle $(E,\g{\cdot}{\cdot}_E)|_U$, and $\mathcal{L}_{P,U}^{wave}$, then we can determine the structures $(E,\g{\cdot}{\cdot}_E,\nabla,A)$ globally on $M$.
\end{prop}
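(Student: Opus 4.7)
The plan is to reconstruct the Hermitian bundle $(E,\g{\cdot}{\cdot}_E)$ first, pointwise and then as a smooth object, and afterwards to extract $\nabla$ and $A$ from the action of $P$ on a dense family of known sections.

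For the fibre at an arbitrary $y\in M$, use the Riemannian structure (already recovered globally in Proposition \ref{prop_rDistcor}) to find $(x,v)\in SM|_U$ and $0\leq s<t^*(x,v)$ with $y=\gamma_{x,v}(s)$. For $T>s$, Lemma \ref{lem_frame} produces controls $(h_\ell)_{\ell=1}^{rk(E)}\subset \mathcal{F}(2T,U)$ such that $b_\ell:=w^{h_\ell}(T,\cdot)$ is a smooth local frame for $E$ near $y$ and orthonormal at $y$. For any focusing double sequence $\Phi^y$ as in Lemma \ref{lem_dseq}, the coordinates
\begin{equation*}
a^\ell(\Phi^y)=\g{e(\Phi^y)}{b_\ell(y)}_E=\lim_{k\to\infty}\lim_{j\to\infty}\rg{w^{f_{jk}}(T,\cdot)}{w^{h_\ell}(T,\cdot)}_{L^2}
\end{equation*}
are computable from $\mathcal{L}_{P,U}^{wave}$ via the Blagovestchenskii identity (Lemma \ref{prop_blag}). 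By Lemma \ref{lem_conv}, the assignment $\Phi^y\mapsto(a^\ell(\Phi^y))_\ell$ then descends to a bijection from equivalence classes of admissible double sequences onto $\CC^{rk(E)}\simeq E_y$, and orthonormality of $(b_\ell(y))$ implies the Hermitian inner product on $E_y$ is the standard one in these coordinates. Varying $(x,v,s,T,h_\ell)$ provides a trivializing atlas; the transition functions between two such frames $(b_\ell),(\tilde b_{\ell'})$ at a common point $y'$ are the pairings $\g{b_\ell(y')}{\tilde b_{\ell'}(y')}_E$, also computable by the same focusing procedure. Smoothness of each $b_\ell$ as a section of the intrinsic bundle makes these transitions smooth, yielding a smooth Hermitian bundle over $M$ which restricts to $(E,\g{\cdot}{\cdot}_E)|_U$ on $U$ and is isomorphic to the intrinsic $E$ via $e(\Phi^y)\leftrightarrow e_y$.

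With the bundle in hand, any $w^f(t,\cdot)$ for $f\in C_0^\infty((0,\infty)\times U;E)$, $t>0$, can be interpreted as a smooth section of $E$ over $M$ by applying the fibrewise procedure to the fixed sequence $f$. The identity $Pw^f(t,\cdot)=f(t,\cdot)-\partial_t^2 w^f(t,\cdot)$ then determines the action of $P$ on each such section, and approximate controllability (Lemma \ref{prop_approxcont}, applied with varying $T$ and time shifts) shows these sections are dense in $C^\infty(M;E)$. Hence $P$ is determined as a symmetric second-order operator on smooth sections. In any local trivialization $P$ has the form \eqref{eq_loc} with leading part $-g^{ij}\operatorname{Id}_E$ already known; Hermitian compatibility of $\nabla$ pins down the connection one-form $\omega$ uniquely from the subprincipal coefficients $b^j$, and finally $A=P-\nabla^*\nabla$.

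The main obstacle lies in the bundle-reconstruction step: one must verify that the vector $e(\Phi^y)\in E_y$, and thus the coordinate data defining the atlas, is independent of the auxiliary choices of geodesic $\gamma_{x,v}$, time $T$, and controls $h_\ell$, and that the resulting bundle is genuinely smooth and globally isomorphic as a Hermitian bundle to the intrinsic $E$. Coherence at points approachable by several minimizing geodesics from $U$ is the crux of this check, handled by adapting the closed-manifold argument of \cite{kurylev2018inverse}; once that is in place, the extraction of $\nabla$ and $A$ from $P$ is a routine local computation using \eqref{eq_loc} and Hermitian compatibility.
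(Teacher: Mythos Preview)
Your approach is essentially the paper's: focus waves via the double sequences of Lemmas~\ref{lem_dseq}--\ref{lem_conv}, read off fibre coordinates through Blagovestchenskii pairings against the frame of Lemma~\ref{lem_frame}, assemble the bundle from these local trivializations, and then recover $\nabla,A$ from the action of $P$. The bundle-reconstruction half matches the paper closely, including the deferral of coherence issues to \cite{kurylev2018inverse}.

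There is, however, a genuine imprecision in your final step. You assert that approximate controllability makes the family $\{w^f(t,\cdot)\}$ dense in $C^\infty(M;E)$, and conclude that knowing $Pw^f=f-\partial_t^2 w^f$ on this family determines $P$. Lemma~\ref{prop_approxcont} only gives $L^2$-density, and $L^2$-density of a family of smooth sections does \emph{not} guarantee that their $2$-jets at a given point span, which is what you would need to read off the coefficients of a second-order operator directly. The paper avoids this by dualizing: from $-\rg{\partial_t^2 w^h(T,\cdot)}{\phi}_{L^2}=\rg{w^h(T,\cdot)}{P\phi}_{L^2}$ (using symmetry of $P$) and $L^2$-density of the $w^h(T,\cdot)$, one determines $P\phi$ for \emph{every} test section $\phi\in C_0^\infty$, and then extracts $B$ and $A$ by choosing $\phi$ with prescribed jets as in \eqref{eq_connect1}. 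Your argument is one line away from this---you already note $P$ is symmetric---but as written the density claim is false and the inference from it does not go through.
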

\begin{proof}
Shrinking $U$ as necessary, we assume $U$ is a coordinate ball that trivializes $E$. Observe that due to compactness of $M$ and Lemma \ref{lem_cutprop}, we can cover $E$ with finitely many charts of the form 
\[\mathcal A=({}^{(0)}\mathcal A,{}^{(1)}\mathcal A,\ldots, {}^{(N)}\mathcal A):=(U,{}^{(1)}\mathcal{V}, \ldots, {}^{(N)}\mathcal{V}),\]
with ${}^{(j)}\mathcal V$ constructed using \eqref{eq_shrinky}. Therefore, $E$ is trivial over each ${}^{(j)}\mathcal A$. For each $y\in ^{(j)}\mathcal A$, we can use $\mathcal{L}_{P,U}^{wave}$ to detect sequences $(\Phi^y_\alpha)_\alpha$ satisfying the conditions of Lemma \ref{lem_dseq} by Lemmas \ref{prop_approxcont}, \ref{prop_blag}, and \ref{lem_supp}. We are able to test for smoothness of $y\mapsto e(\Phi^{y})$ by checking the smoothness of $y\mapsto \g{e(\Phi^y)}{w^{h}(T,\cdot)}_E$ for $h\in \mathcal F(2T,U)$, per Condition 3 of Lemma \ref{lem_dseq} and Lemma \ref{lem_frame}. Further, due to the existence of an orthonormal frame $(e_\ell)_{\ell=1}^{rk(E)}$ over ${}^{(j)}\mathcal A$ and Lemma \ref{lem_conv}, we can select for sequences $(\Phi^y_\ell)_{\ell=1}^{rk(E)}$ such that $e_\ell(y)=e(\Phi^y_\ell)$ by verifying
\begin{equation}\label{eq_localframe}
\lim_{k\to\infty}\lim_{j\to\infty}\rg{\chi_{{}^{(j)}\mathcal A}e_\kappa(\Phi^y)}{w^{f^y_{jk,\ell}}(T,\cdot)}_{L^2}=\g{e_\kappa(y)}{e_\ell(y)}_E=\delta_{\kappa\ell}.
\end{equation}
The Hermitian metric $\g{\cdot}{\cdot}_E$ on this trivialization over ${}^{(j)}\mathcal A$ is then $\g{v}{w}_E=\delta_{\kappa\ell}a^{\kappa}\overline{c^\ell}$ for $v=a^{\kappa}e_\kappa$, $w=c^{\ell}e_\ell$. Given a similar orthonormal frame $(\tilde e_\ell)_{\ell=1}^{rk(E)}$ and associated sequence over ${}^{(k)}\mathcal A$, we apply the method of \eqref{eq_localframe} to calculate $\g{e_\kappa}{\tilde e_\ell}_E$ to find the transition functions $\tau_{jk}:{}^{(j)}\mathcal A\cap {}^{(k)}\mathcal A\to GL(rk(E),\CC)$.

Having determined the Hermitian bundle structure $(E,\g{\cdot}{\cdot}_E)\to M$, we now show how we can recover the connection $\nabla$ and potential $A$. From our reasoning above, we know the components of $w^h(T,\cdot)$ for $h\in \mathcal F(2T,U)$ on ${}^{(j)}\mathcal A$, and by a time translation we know the components of $w^h(t,\cdot)$ in the same chart, for $t\in (0,T)$ and $h\in \mathcal F(T,U)$. We therefore know the left hand side of
\begin{equation}\label{eq_connect}
   -\rg{\partial_t^2w^h(T,\cdot)}{\phi}_{L^2}=\rg{Pw^h(T,\cdot)}{\phi}_{L^2}=\rg{w^h(T,\cdot)}{P\phi}_{L^2}.
\end{equation}
for any $\phi\in C_0^\infty({}^{(j)}\mathcal A;E)$, $h\in C_0^\infty((0,T)\times U;E)$. By density of $w^h(T,\cdot)$ in $L^2({}^{(j)}\mathcal A;E)$, we deduce $P\phi$ from \eqref{eq_connect}.

Since $\nabla=d+B$ for skew-Hermitian $B\in C^\infty(M;E\otimes E^*\otimes T^*M)$, we have:
\begin{equation}\label{eq_connect1}
(P-A)\phi=\nabla^*\nabla \phi= d^*d\phi-2\tr_g (B\otimes d\phi)+(d^*B)\phi-\tr_g (B\otimes B\phi)
\end{equation}
where the contractions are between the $E^*\otimes T^*M$ and $E\otimes T^*M$ entries.
Locally on ${}^{(j)}\mathcal A$, if we choose $\phi=\phi^\ell e_\ell$ such that $\phi(0)=0$, $\partial_k\phi^\ell(0)=1$, then
combined with our knowledge of $g$ we may calculate $d^*d\phi$ and find
\[2\tr (B\otimes d\phi)=2g^{ik}e_\ell(B_{i})\]
from \eqref{eq_connect1}. This yields $B$ and therefore the connection $\nabla$. Subtracting $P\phi$ from \eqref{eq_connect1} then gives $A$, as needed.
\end{proof}
By Propositions \ref{prop_heatkercor}, \ref{prop_transmutecor},  \ref{prop_rDistcor}, and \ref{prop_rConn}, we have recovered the full structure-preserving isomorphism $\Psi$.
\printbibliography[heading=bibnumbered]
\end{document}